   \def\MR#1{}
\begin{document}
\pagestyle{myheadings}

\title{Eigenvalue location in cographs}
\keywords{cograph, adjacency matrix, eigenvalue}
\subjclass{05C50, 05C85, 15A18}

\author{David P. Jacobs}
\address{ School of Computing, Clemson University Clemson, SC 29634 USA}
\email{\tt dpj@clemson.edu}
\author{Vilmar Trevisan}
\address{Instituto de Matem\'atica, UFRGS,  91509--900 Porto Alegre, RS, Brazil}
\email{\tt trevisan@mat.ufrgs.br}
\author{Fernando Colman Tura}
\address{Departamento de Matem\'atica, UFSM,  97105--900 Santa Maria, RS, Brazil}
\email{\tt ftura@smail.ufsm.br}
\pdfpagewidth 8.5 in
\pdfpageheight 11 in

\def\floor#1{\left\lfloor{#1}\right\rfloor}
\newcommand{\Gofr}{G_r}
\newcommand{\Goneclass}{\mathscr G}
\newcommand{\Real}{{\mathbb R}}
\newcommand{\union}{\cup}
\newcommand{\compunion}{\overline{\union}}
\newcommand{\join}{\otimes}
\newcommand{\dom}[1]{\gamma(#1)}
\newcommand{\theratio}[2]{ \lceil \frac{#1}{#2} \rceil }
\newcommand{\avgdeg}[1]{2 - \frac{2}{#1} }
\newcommand{\floorratio}[2]{ \lfloor \frac{#1}{#2} \rfloor }
\newcommand{\Prf}{{\bf Proof: }}
\newcommand{\PrfSketch}{{\bf Proof (Sketch): }}
\newcommand{\boldc}{\mbox{\bf c}}
\newcommand{\sign}{\mbox{sign}}
\newtheorem{Thr}{Theorem}
\newtheorem{Pro}{Proposition}
\newtheorem{Con}{Conjecture}
\newtheorem{Cor}{Corollary}
\newtheorem{Lem}{Lemma}
\newtheorem{Obs}{Observation}
\newtheorem{Rem}{Remark}
\newtheorem{Clm}{Claim}
\newtheorem{Que}{Question}
\newtheorem{Fac}{Fact}
\newtheorem{Ex}{Example}
\newtheorem{Def}{Definition}
\newtheorem{Prop}{Proposition}
\def\floor#1{\left\lfloor{#1}\right\rfloor}

\newenvironment{my_enumerate}{
\begin{enumerate}
  \setlength{\baselineskip}{14pt}
  \setlength{\parskip}{0pt}
  \setlength{\parsep}{0pt}}{\end{enumerate}
}
\newenvironment{my_description}{
\begin{description}
  \setlength{\baselineskip}{14pt}
  \setlength{\parskip}{0pt}
  \setlength{\parsep}{0pt}}{\end{description}
}

\begin{abstract}
We give an $O(n)$ time and space algorithm
for constructing a diagonal matrix congruent to $A + xI$,
where $A$ is the adjacency matrix of a cograph and $x \in \Real$.
Applications include determining the number of eigenvalues of a cograph's adjacency matrix that
lie in any interval,
obtaining a formula for the inertia of a cograph,
and exhibiting infinitely many pairs of equienergetic cographs with integer energy.
\end{abstract}

\thanks{Work supported by Science without Borders CNPq - Grant 400122/2014-6, Brazil}

\maketitle

\section{Introduction}
\label{sec:intro}
Let $G = (V,E)$ be an undirected graph
with vertex set $V$ and edge set $E$.
For $v \in V$, $N(v)$ denotes the {\em open neighborhood} of $v$, that is,
$\{ w | \{v,w\} \in E \}$.
The {\em closed neighborhood} $N[v] = N(v) \cup \{v\}$.
If $|V| = n$, the {\em adjacency matrix} $A=[a_{ij}]$ is the $n \times n$ matrix of zeros and ones
such that $a_{ij} = 1$ if and only if $v_i$ is adjacent to $v_j$
(that is, there is an edge between $v_i$ and $v_j$).
A value $\lambda$ is an {\em eigenvalue} if $\det(A - \lambda I) = 0$,
and since $A$ is real symmetric its eigenvalues are real.
In this paper, a graph's {\em eigenvalues} are the eigenvalues of its adjacency matrix.

This paper is concerned with {\em cographs}.
This class of graphs has been discovered independently by several authors
in many equivalent ways since the 1970's.
Corneil, Lerchs and Burlingham
\cite{Corneil1981}
define cographs recursively:
\begin{enumerate}
\item A graph on a single vertex is a cograph;
\item A finite union of cographs is a cograph;
\item The complement of a cograph is a cograph.
\end{enumerate}
A graph is a cograph if and only
it has no induced path of length four \cite{Corneil1981}.
They are often simply called $P_4$ {\em free} graphs in the literature.
Linear time algorithms for recognizing cographs are given in
\cite{Corneil1985} and more recently in \cite{Habib2005}.

While recognition algorithms for cographs is an interesting problem,
our motivation for considering cographs comes from {\em spectral graph theory}
\cite{Brouwer2012,CDS1995}.
Spectral properties of cographs were studied by Royle in \cite{Royle2003}
where the surprising result was obtained that the rank of a cograph is
the number of non-zero rows in the adjacency matrix.
An elementary proof of this property
was later given in \cite{Chang2008}.
More recently, in \cite{BSS2011} B{\i}y{\i}ko{\u{g}}lu, Simi{\'c} and Stani{\'c}
obtained the multiplicity of $-1$ and $0$ for cographs.

The purpose of this paper is to extend to cographs
eigenvalue location algorithms that exist
for trees \cite{JT2011}, threshold graphs \cite{JTT2013} and generalized lollipop graphs \cite{Del-Vecchio2015}.
Recall that two real symmetric matrices $R$ and $S$ are {\em congruent}
if there exists a nonsingular matrix $P$ for which $R = P^T S P$.
Our main focus is an algorithm that uses $O(n)$ time and space
for constructing a diagonal matrix congruent to $A + xI$,
where $A$ is adjacency matrix of a cograph, and $x \in \Real$.
Our paper is similar in spirit to the papers \cite{JT2011,JTT2013}
which describe $O(n)$ diagonalization algorithms
for trees and for threshold graphs.
Threshold graphs are $P_4$, $C_4$, and $2K_2$ free,
and therefore are a subclass of cographs.
Hence our algorithm is an extension of the algorithm in \cite{JTT2013}.

Several points are worth noting.
First, while one might expect linear time algorithms for graphs with {\em sparse} adjacency
matrices such as trees, the adjacency matrix of a cograph can be {\em dense}.
Next, while our algorithm's correctness is based on elementary matrix operations,
its implementation operates directly on the cotree and uses only $O(n)$ space.
Finally, the {\em analysis} of algorithms for trees and threshold graphs
has led to interesting theoretical results.
For example, in \cite{Patuzzi2014}
conditions were determined for the index (largest eigenvalue) in trees to be integer.
In \cite{JTT2013} the authors showed that all eigenvalues
of threshold graphs, except $-1$ and $0$, are simple.
In \cite{JTT2015} the algorithm was used to show that no threshold graphs
have eigenvalues in $(-1,0)$.

If $G$ is a graph having eigenvalues $\lambda_1, \ldots, \lambda_n$, its {\em energy},
denoted $E(G)$ is defined to be $\sum_{i=1}^n |\lambda_i|$.
Two non-cospectral graphs with the same energy are called
{\em equienergetic}.
Finding non-cospectral equienergetic graphs is a relevant problem.
In \cite{JTT2015} the authors presented infinite sequences of connected,
equienergetic pairs of non-cospectral threshold graphs with integer energy.
In this paper, we continue this investigation.

Here is an outline of the remainder of this paper.
In Section~\ref{sec:cotrees} we describe cotrees, and present some known facts.
In Section~\ref{sec:diagstep} we give the elementary matrix operations used in our algorithm.
In Section~\ref{sec:diagalgo}
we give the complete diagonalization algorithm.
In Section~\ref{sec:locating},
using Sylvester's Law of Inertia,
we show how to efficiently determine how many eigenvalues of
a cograph lie in a given interval.
The {\em inertia} of a graph $G$ is the triple
$(n_{+}, n_{0}, n_{-})$ giving the number of eigenvalues
of $G$ that are positive, zero, and negative,
and in Section~\ref{sec:inertia} we give a formula for cograph inertia.
Finally in Section~\ref{sec:energy}
we exhibit infinitely many non-threshold cographs
equienergetic to a complete graph.

\section{Cotrees and adjacency matrix}
\label{sec:cotrees}
Cographs have been represented in various ways,
and it is useful to recall the representation given
in \cite{Corneil1981}.
The unique {\em normalizend form} of a cograph $G$ is defined recursively:
If $G$ is connected, then it is in normalized form if
it is expressed as a single vertex, or the {\em complemented union} of $k \geq 2$
$$
G = \overline{G_1 \union G_2 \union \ldots \union G_k}
$$
connected cographs $G_i$ in normalized form.
If $G$ disconnected its normalized form
is the complement of a connected cograph in normalized form.
The unique rooted tree $T_G$ representing the parse structure of
the cograph's normalized form is called the {\em cotree}.
The leaves or terminal vertices of $T_G$ correspond to vertices in the cograph.
The interior nodes represent $\compunion$ operations.

It is not difficult to show that the class of cographs is also
the smallest class of graphs containing $K_1$, and closed under
the union $\union$ and join $\join$ operators.
In fact one can transform the cotree
of Corneil, Lerchs and Burlingham
into an equivalent
tree $T_G$ using $\union$ and $\join$.
In the connected case, we simply place a $\join$ at the tree's root,
placing $\union$ on interior nodes with odd depth,
and placing $\join$ on interior nodes with even depth.
To build a cotree for a disconnected cograph, we place $\union$ at the root,
and place $\join$'s at odd depths, and $\union$'s at even depths.
It will be convenient for us to use this unique alternating representation.
In \cite{BSS2011} this structure is called a {\em minimal cotree},
but throughout this paper we call it simply a {\em cotree}.
All interior nodes of cotrees have at least two children.
Figure~\ref{fig:cotree} shows a cograph and cotree.
The following is well known.

\begin{Lem}
\label{lem:lca}
If $G$ is a cograph with cotree $T_G$,
vertices $u$ and $v$ are adjacent in $G$ if and only if
their least common ancestor in $T_G$ is $\join$.
\end{Lem}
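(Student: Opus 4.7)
The natural plan is structural induction on the cotree $T_G$, exploiting the alternating $\union/\join$ structure described just before the lemma.

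For the base case, I would take $T_G$ to be a single leaf, so $G = K_1$; the claim holds vacuously because no pair of distinct vertices exists. For the inductive step, let $r$ be the root of $T_G$ with children $c_1,\dots,c_k$ whose corresponding subtrees $T_1,\dots,T_k$ represent cographs $G_1,\dots,G_k$ (smaller, hence covered by the inductive hypothesis). By the alternating convention, each $c_i$ that is not a leaf is the opposite symbol from $r$.

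I would then split into two symmetric cases according to the label of $r$. If $r = \join$, then by the construction $G = G_1 \join \cdots \join G_k$, so all cross-edges between different $G_i$'s are present. Given vertices $u \in G_i$ and $v \in G_j$ with $i \neq j$, their least common ancestor in $T_G$ is exactly $r$ (since the paths from $u$ and $v$ first meet at the root), which is $\join$, and indeed $u,v$ are adjacent. If instead $u,v$ both lie in $G_i$, their LCA in $T_G$ coincides with their LCA in $T_i$, and adjacency in $G$ restricted to $V(G_i)$ coincides with adjacency in $G_i$; the inductive hypothesis applied to $T_i$ then finishes this subcase. The case $r = \union$ is handled the same way: cross-vertices are non-adjacent and have LCA equal to $r = \union$, while same-subtree vertices reduce to the inductive hypothesis on $T_i$.

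I do not expect a real obstacle here; the only point requiring care is ensuring that the LCA of two vertices in the same subtree $T_i$ is computed within $T_i$ and not at a strictly higher node, which follows immediately because the tree structure is fixed and interior nodes have at least two children. With that observation the induction closes cleanly, simultaneously yielding both directions of the ``if and only if.''
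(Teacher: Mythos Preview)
Your structural induction is correct and is the standard way to establish this fact. The paper itself offers no proof of Lemma~\ref{lem:lca}; it is stated as ``well known'' and left unproved, so there is nothing to compare against. One minor remark: the observation that the LCA of two leaves in the same subtree $T_i$ lies within $T_i$ follows simply because $c_i$ is already a common ancestor, so the LCA is at or below $c_i$; the clause about interior nodes having at least two children is not actually needed for that step.
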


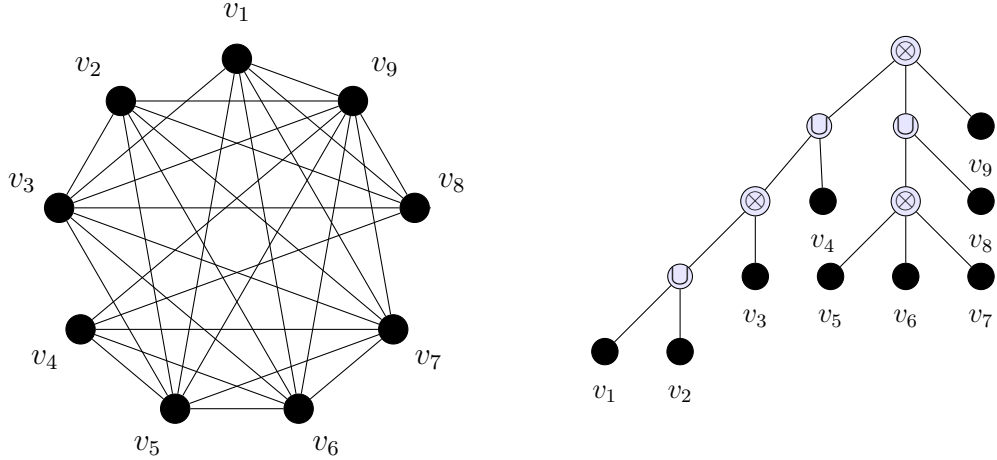
\begin{figure}[h!]
       \begin{minipage}[c]{0.5 \linewidth}
\begin{tikzpicture}
  [scale=0.8,auto=left,every node/.style={circle}]
  \foreach \i/\w in {1/,2/,3/,4/,5/,6/,7/,8/,9/}{
     \node[draw,circle,fill=black,label={360/9 * (\i - 1)+90}: $v_{\i}$ ] (\i) at ({360/9 * (\i - 1)+90}:3) {\w};} 
  \foreach \from in {3,5,6,7,9}{
    \foreach \to in {1,2,...,\from}
      \draw (\from) -- (\to);}
       \foreach \from in {8}{
       \foreach \to in {1,2,3,4,\from}
       \draw (\from) -- (\to)
       ;}
\end{tikzpicture}
       \end{minipage}\hfill
       \begin{minipage}[l]{0.5 \linewidth}
\begin{tikzpicture}
  [scale=1,auto=left,every node/.style={circle,scale=0.9}]
  \node[draw,circle,fill=black,label=below:$v_1$] (p) at (-2,3) {};
  \node[draw,circle,fill=black,label=below:$v_7$] (o) at (3,4) {};
  \node[draw,circle,fill=black,label=below:$v_6$] (n) at (2,4) {};
   \node[draw,circle,fill=black,label=below:$v_5$] (q) at (1,4) {};
  \node[draw, circle, fill=blue!10, inner sep=0] (m) at (2,5) {$\join$};
  \node[draw,circle,fill=black,label=below:$v_8$] (l) at (3,5) {};
  \node[draw,circle,fill=black,label=below:$v_9$] (k) at (3,6) {};
  \node[draw, circle, fill=blue!10, inner sep=0] (j) at (2,6) {$\union$};
  \node[draw,circle,fill=blue!10, inner sep=0] (h) at (2,7) {$\join$};
  \node[draw, circle, fill=blue!10, inner sep=0] (g) at (0.85,6) {$\union$};
  \node[draw,circle,fill=black,label=below:$v_4$] (f) at  (0.9,5) {};
  \node[draw,circle,fill=blue!10, inner sep=0] (a) at (0,5) {$\join$};
  \node[draw, circle, fill=blue!10, inner sep=0] (b) at (-1,4) {$\union$};
  \node[draw,circle,fill=black,label=below:$v_3$] (c) at (0,4) {};
  \node[draw,circle,fill=black,label=below:$v_2$] (d) at (-1,3) {};
  \path (a) edge node[left]{} (b)
        (a) edge node[below]{} (c)
        (b) edge node[left]{} (d)
        (f) edge node[right]{}(g)
        (g) edge node[left]{}(a)
        (h) edge node[right]{}(j)
        (h) edge node[left]{}(g)
        (h) edge node[left]{}(k)
        (j) edge node[right]{}(l)
        (j) edge node[below]{}(m)
        (m) edge node[below]{}(n)
        (m) edge node[right]{}(o)
        (b) edge node[left]{} (p)
        (m) edge node[left]{} (q);
\end{tikzpicture}
       \end{minipage}
       \caption{A cograph $G$ and its cotree.}
       \label{fig:cotree}
\end{figure}

Two vertices $u$ and $v$ are {\em duplicates} if $N(u) = N(v)$
and {\em coduplicates} if $N[u] = N[v]$.
We call $u$ and $v$ {\em siblings} if they are either duplicates or coduplicates.
Siblings play an important role in the structure of cographs,
as well as in this paper.

\begin{Lem}
\label{lem:parentofsiblings}
Two vertices $v$ and $u$ in a cograph are siblings if and only if
they share the same parent $w$ node in the cotree.
Moreover, if $w = \union$, they are duplicates.
If $w = \join$ they are coduplicates.
\end{Lem}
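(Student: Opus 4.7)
The plan is to derive both directions directly from Lemma~\ref{lem:lca} together with the alternating structure of the cotree (consecutive levels carry opposite operators, and every interior node has at least two children).

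For the \emph{forward} direction, assume $u$ and $v$ share a common parent $w$ in $T_G$. Then their least common ancestor is exactly $w$. For any leaf $z \notin \{u,v\}$, the ancestors of $z$ above $w$ coincide with the ancestors of $u$ (and of $v$) above $w$, so the least common ancestor of $u$ with $z$ equals the least common ancestor of $v$ with $z$. Hence by Lemma~\ref{lem:lca}, $u \sim z$ if and only if $v \sim z$. If $w = \union$, Lemma~\ref{lem:lca} gives $u \not\sim v$, so $N(u)$ and $N(v)$ agree on all vertices and neither contains the other: $N(u)=N(v)$, i.e. $u$ and $v$ are duplicates. If $w = \join$, the same lemma yields $u \sim v$, and combining this with the agreement on all other $z$ gives $N[u]=N[v]$, i.e. coduplicates.

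For the \emph{backward} direction, suppose $u,v$ are siblings and let $w$ be their least common ancestor. If they are duplicates, then $u \notin N(u) = N(v)$, so $u \not\sim v$ and Lemma~\ref{lem:lca} forces $w = \union$; if they are coduplicates, $u \in N[u] = N[v]$ gives $u \sim v$ and $w = \join$. It remains to show $w$ is actually the parent of both $u$ and $v$. Suppose not; then, without loss of generality, $u$ is a proper descendant of some child $u'$ of $w$ with $u' \neq u$. Because interior labels alternate, $u'$ is an interior node whose label is the opposite of $w$'s, and $u'$ has at least two children. Pick a leaf $z$ lying in a subtree of $u'$ that does not contain $u$; such a $z$ exists and is distinct from $u$ and from $v$ (since $v$'s path to the root goes through a different child of $w$, so $v$ is not a descendant of $u'$). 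Then the least common ancestor of $u$ and $z$ is $u'$, while that of $v$ and $z$ is $w$. Since $u'$ and $w$ have opposite labels, exactly one of $u\sim z$, $v\sim z$ holds, contradicting $N(u)=N(v)$ (duplicate case) or $N[u]=N[v]$ (coduplicate case).

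The main obstacle is this last step — converting the weaker information "$u$ and $v$ have the same LCA of the correct type" into "they share the \emph{parent}". The key leverage is the alternation of labels together with the fact that every interior node has at least two children, which guarantees the existence of a witness $z$ distinguishing $u$ from $v$ whenever one of them sits strictly below a child of $w$.
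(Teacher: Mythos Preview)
Your argument is correct. The forward direction is a clean application of Lemma~\ref{lem:lca}: since $u$ and $v$ are leaves with the same parent $w$, their ancestor chains agree from $w$ upward, so $\operatorname{lca}(u,z)=\operatorname{lca}(v,z)$ for every other leaf $z$, and the type of $w$ decides whether $u\sim v$. The backward direction is also sound; the crucial step is well handled: if the least common ancestor $w$ were not the parent of (say) $u$, then the child $u'$ of $w$ above $u$ is an interior node of the opposite type, and any leaf $z$ in a sibling subtree of $u'$ gives $\operatorname{lca}(u,z)=u'$ while $\operatorname{lca}(v,z)=w$, forcing $u\sim z\not\Leftrightarrow v\sim z$ and contradicting the sibling hypothesis. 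Your explicit use of the two structural facts about the minimal cotree---alternating labels and all interior nodes having at least two children---is exactly what is needed here.

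As for comparison: the paper states Lemma~\ref{lem:parentofsiblings} without proof, treating it as a standard fact about cotrees, so there is no argument in the paper to compare yours against. Your proof would serve perfectly well as the omitted justification.

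One very minor remark on exposition: in the forward direction, the sentence ``the ancestors of $z$ above $w$ coincide with the ancestors of $u$ above $w$'' is slightly imprecise when $z$ is not a descendant of $w$. What you really use is that the ancestor chain of $u$ and that of $v$ coincide from $w$ upward, so the deepest common ancestor of $z$ with either chain is the same node. The conclusion is unaffected.
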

\begin{Lem}
\label{lem:twosiblings}
A cograph $G$ of order $n \geq 2$ has a pair of siblings.
\end{Lem}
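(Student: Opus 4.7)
The plan is to work directly with the cotree $T_G$ and exploit its structural constraints (all interior nodes have at least two children, and interior nodes alternate $\join$ and $\union$) together with Lemma~\ref{lem:parentofsiblings}, which already reduces the claim to finding two leaves of $T_G$ sharing a parent.

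First I would observe that, since $n \geq 2$, the cograph $G$ has at least two vertices, so $T_G$ has at least two leaves. Hence $T_G$ cannot consist of a single leaf, and therefore has at least one interior node (the root). This justifies the key existence step in the next paragraph.

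Next I would choose $w$ to be an interior node of $T_G$ of \emph{maximum depth}. Such a $w$ exists by the previous observation, since the set of interior nodes is nonempty and finite. By the defining property of cotrees recalled in Section~\ref{sec:cotrees}, $w$ has at least two children. Moreover, by the maximality of the depth of $w$, none of its children can themselves be interior nodes (otherwise we would have an interior node strictly deeper than $w$). Consequently, every child of $w$ is a leaf of $T_G$, and $w$ has at least two such leaf children. By Lemma~\ref{lem:parentofsiblings}, any two of these leaf children are siblings in $G$, giving the desired pair.

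There is no real obstacle here; the only thing worth being careful about is the base case $n = 2$, where $T_G$ has exactly one interior node (the root) with two leaf children, and these are the required siblings, consistent with the general argument above.
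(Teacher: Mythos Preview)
Your proof is correct and follows essentially the same approach as the paper: the paper's proof is the single sentence ``The cotree of $G$ must have an interior vertex adjacent to two leaves,'' and your maximum-depth argument is exactly the standard way to justify that assertion. You have simply filled in the details the paper leaves implicit, including the explicit appeal to Lemma~\ref{lem:parentofsiblings}.
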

\begin{proof}
The cotree of $G$ must have an interior vertex adjacent to two leaves.
\end{proof}
Let $G$ be a cograph with cotree $T_G$.  Let $G - v$ denote the subgraph obtained by removing $v$.
It is known that $G - v$ is a cograph, so we shall use $T - v$ to denote the cotree of $G - v$.
There is a general method for constructing $T - v$ \cite[Lem. 1]{Corneil1981}.
However, it somewhat simplifies the process if $v$ has maximum depth.
The following lemma can be proved with Lemma~\ref{lem:lca}.

\begin{Lem}
\label{lem:cotreeremove}
Let $T_G$ be a cotree, and let $\{v,u\}$ be siblings of greatest depth,
whose parent $w$ has $k$ children.
If $k > 2$ we obtain $T-v$ by removing $v$.
If $k = 2$ and $w$ is not the root,
we obtain $T-v$ by moving $u$ to the parent of $w$, and removing $v$ and $w$.
If $k = 2$ and $w$ is the root, the cotree is $u$.
\end{Lem}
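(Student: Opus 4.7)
The plan is to verify, in each of the three cases, that the tree $T'$ described by the construction is indeed the cotree of $G-v$. Since a cotree in the minimal form used here is uniquely determined once we require alternation of $\union$/$\join$ labels along root-to-leaf paths and interior nodes of degree at least two, it suffices to check that $T'$ satisfies those structural conditions and that, under the correspondence of Lemma~\ref{lem:lca}, the least-common-ancestor labels in $T'$ reproduce exactly the adjacencies of $G-v$.

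First I would dispose of the two easy cases. If $k>2$, the parent $w$ still has $k-1\geq 2$ children after deleting the leaf $v$, so alternation and the degree condition are preserved. For any two vertices $x,y\in V(G-v)$, the path between them in $T_G$ does not visit the leaf $v$, so their least common ancestor is unchanged, and Lemma~\ref{lem:lca} delivers the correct adjacencies in $G-v$. In the root subcase $k=2$ with $w$ the root, the cotree of $G$ has only the two leaves $u$ and $v$, so $V(G)=\{u,v\}$ and $G-v$ is a single vertex whose cotree is the single-vertex tree $u$.

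The main work lies in the contraction case: $k=2$ with $w$ not the root. Here deleting $v$ leaves $w$ with a single child $u$, so to restore minimality we replace $w$ by $u$ among the children of $w$'s parent $w'$. I would verify three things. (i) \emph{Validity}: $w'$ has the same number of children as before, hence still at least two; because $u$ and $v$ are siblings, $u$ is a leaf, so promoting it to a child of $w'$ neither creates a unary interior node nor disturbs the alternation of interior labels elsewhere. (ii) \emph{Adjacencies not involving $u$}: for $x,y\in V(G-v)\setminus\{u\}$, the path between $x$ and $y$ in $T_G$ visits neither $v$ nor $w$, so their LCA is unaffected by the contraction. (iii) \emph{Adjacencies involving $u$}: for any $y\in V(G-v)\setminus\{u\}$, the LCA of $u$ and $y$ in $T_G$ cannot be $w$ itself, since $w$'s only children are the leaves $u$ and $v$; hence that LCA is a strict ancestor of $w$, equal to the LCA of $u$ and $y$ in the new tree in which $u$ hangs directly from $w'$. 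Combined, (i)--(iii) show that the contracted tree realizes $G-v$.

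The hard part will be making the ancestry argument in (iii) airtight: one must use that $u$ and $v$ are leaves (an automatic consequence of sibling being a graph-vertex notion) together with the hypothesis that their parent $w$ has exactly two children, so that $w$ cannot appear as an LCA of $u$ with any vertex other than $v$. Once that observation is isolated, the preservation of LCAs under the contraction is immediate, and the structural conditions on $T'$ follow by inspection in each case.
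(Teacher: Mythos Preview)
Your argument is correct and follows precisely the route the paper indicates: the paper does not supply a proof of this lemma but merely remarks that it ``can be proved with Lemma~\ref{lem:lca},'' and your case analysis using the LCA characterization fills in exactly those details. Your key observation in case (iii)---that $w$'s subtree contains only the leaves $u$ and $v$, so the LCA of $u$ with any other vertex is a strict ancestor of $w$ and hence unchanged by the contraction---is the right one, and the structural checks on alternation and minimum degree are routine as you say.
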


We end this section by making an important observation.
\begin{Lem}
\label{lem:siblingrows}
Let $G$ be a cograph with adjacency matrix $A$ and cotree $T_G$.
Let $\{ v_k, v_l \}$ be siblings with parent $w$.
If $w = \union$ (they are duplicates),
then rows(columns) $k$ and $l$ in $A$ are equal.
If $w = \join$ (they are coduplicates),
then the rows (columns) are equal
except in two positions, namely
$A[k,k] = A[l,l] = 0$ and $A[k,l] = A[l,k] = 1$.
\end{Lem}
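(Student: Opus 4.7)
The plan is to read off the rows of $A$ entry by entry using Lemma~\ref{lem:lca}, since an entry $A[i,j]$ with $i \neq j$ is precisely the indicator of whether the LCA of $v_i$ and $v_j$ in $T_G$ is a $\join$ node. I would fix the two rows $k$ and $l$ and split into three cases for the column index $j$: $j \notin \{k,l\}$, $j = k$, and $j = l$. The analogous statement for columns then follows immediately from symmetry of $A$.

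The heart of the argument is the case $j \notin \{k,l\}$, where I claim $A[k,j] = A[l,j]$ regardless of whether $w = \union$ or $w = \join$. The key observation is that $v_k$ and $v_l$ have the same set of strict ancestors in $T_G$, namely $w$ together with every node above $w$, simply because they share the parent $w$. For any other leaf $v_j$, the LCA of $v_j$ with $v_k$ is the deepest node that lies both on the path from $v_j$ to the root and in the ancestor set of $v_k$; replacing $v_k$ by $v_l$ in that description changes nothing, so the two LCAs coincide. Lemma~\ref{lem:lca} then yields $A[k,j] = A[l,j]$.

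It remains to handle the positions $j = k$ and $j = l$. The diagonal entries $A[k,k]$ and $A[l,l]$ vanish by definition of an adjacency matrix, while $A[k,l] = A[l,k]$ equals $1$ or $0$ according as the LCA of $v_k$ and $v_l$, which is exactly their common parent $w$, is $\join$ or $\union$. Hence if $w = \union$ all four of these entries are zero, so rows $k$ and $l$ coincide everywhere; if $w = \join$, then $A[k,l] = A[l,k] = 1$ while $A[k,k] = A[l,l] = 0$, giving precisely the exceptional pattern stated. I do not foresee a real obstacle; the only subtlety is making sure to exclude $j \in \{k,l\}$ from the general LCA argument, which is exactly why the $\join$ case produces the two exceptional positions.
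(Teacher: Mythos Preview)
Your proof is correct. The paper itself states Lemma~\ref{lem:siblingrows} as an ``important observation'' without supplying a proof, so there is nothing to compare against directly; your argument via Lemma~\ref{lem:lca} is exactly the natural way to fill in the omitted details, and the case split on $j \notin \{k,l\}$ versus $j \in \{k,l\}$ is handled cleanly.
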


\section{Diagonalizing a row and column}
\label{sec:diagstep}
Given a cograph $G$ with adjacency matrix $A$, and $x \in \Real$,
we will transform $B = A + xI$ into a congruent diagonal matrix in stages.
In this section, we illustrate a stage,
given a partially transformed matrix.
Recall that matrices are congruent if one can obtain
the other by a sequence of {\em pairs} of elementary operations,
each pair consisting of a row operation followed by the {\em same} column operation.

Let $T_G$ be the cotree of $G$, and let
$\{ v_k, v_l \}$ be a  pair of siblings.
We assume the diagonal values $d_k$ and $d_l$ of rows $k$ and $l$, respectively,
may have been modified by previous computations,
but {\em only} diagonal values.
The goal is to annihilate off-diagonal $1$'s in the row and column corresponding to $v_k$,
maintaining congruence to $B$.
Let $w$ be the parent of the siblings in $T_G$.
There are two cases.
\paragraph{\bf {Case 1:}} $w = \join$.
By Lemma~\ref{lem:parentofsiblings}
we know $\{v_k, v_l\}$
are coduplicates.
By Lemma~\ref{lem:siblingrows}
rows (columns) $l$ and $k$ of the matrix $B$  have the form
\begin{center}
$ \left[\begin{array}{ccccccccccc}
&            &     &        &    &  a_1   &   &      & a_1    &     & \\
&            &     &        &    & \vdots &   &      & \vdots &     & \\
&            &     &        &    &  a_i   &   &      & a_i    &     & \\
&            &     &        &    & \vdots &   &      & \vdots &     & \\
a_1 & \ldots & a_i & \ldots &    & d_l    &   &      & 1   & \ldots & a_n \\
&   &        &     &        &    &        &   &      &     &\\
a_1 & \ldots & a_i & \ldots &    & 1      &   &      & d_k & \ldots & a_n\\
&   &        &     &        & \vdots      &   &      & \vdots &     & \\
&   &        &     &        & a_n &       &   & a_n  & & \\
\end{array}\right],$
\end{center}
where $a_i\in \{0,1\}$. The row and column operations
\begin{eqnarray*}
R_k \leftarrow R_k - R_{l} \\
C_k \leftarrow C_k - C_{l}
\end{eqnarray*}
give:
\begin{center}
$ \left[\begin{array}{ccccccccccc}
&            &     &        &    &  a_1   &   &      &  0     &     & \\
&            &     &        &    & \vdots &   &      & \vdots &     & \\
&            &     &        &    &  a_i   &   &      &  0     &     & \\
&            &     &        &    & \vdots &   &      & \vdots &     & \\
a_1 & \ldots & a_i & \ldots &    & d_l    &   &      & 1-d_l  & \ldots & a_n \\
&   &        &     &        &    &        &   &      &     &\\
0&\ldots & 0 & \ldots &    & 1-d_l&       &   & d_k+d_l-2 & \ldots & 0\\
&   &        &     &        & \vdots      &   &      & \vdots &     & \\
&   &        &     &        & a_n &       &   & 0  & & \\
\end{array}\right]$ .\\
\end{center}
Most of the non-zero elements in row and column $k$ have been removed.
But we must now remove the two entries $1 - d_l$.
There are three subcases, depending on whether $d_k + d_l -2 \neq 0$ and whether $d_l = 1$.
\subparagraph{\bf subcase 1a:} $d_k + d_l -2 \neq 0$. Then we
may perform the operations
\begin{eqnarray*}
R_{l} \leftarrow R_{l} - \frac{1 - d_l}{d_k + d_l -2} R_{k} \\
C_{l} \leftarrow C_{l} - \frac{1 - d_l}{d_k + d_l -2}
C_{k}
\end{eqnarray*}
obtaining:
\begin{center}
$ \left[\begin{array}{ccccccccccc}
&            &     &        &    &  a_1   &   &      &  0     &     & \\
&            &     &        &    & \vdots &   &      & \vdots &     & \\
&            &     &        &    &  a_i   &   &      &  0     &     & \\
&            &     &        &    & \vdots &   &      & \vdots &     & \\
a_1 & \ldots & a_i & \ldots &    & \gamma &   &      & 0   & \ldots & a_n \\
&   &        &     &        &    &        &   &      &     &\\
0&\ldots & 0 & \ldots &     & 0  &        &   & d_k+d_l-2 & \ldots & 0\\
&   &        &     &        & \vdots      &   &      & \vdots &     & \\
&   &        &     &        & a_n &       &   & 0  & & \\
\end{array}\right]$,\\
\end{center}
where
$$
\gamma = d_l - \frac{ (1 - d_l )^2}{d_k + d_l - 2} =
\frac{d_k d_l - 1}{d_k + d_l - 2} \cdot
$$
The following assignments are made
\begin{equation}
\label{eq:sub1a}
d_k \leftarrow d_k + d_l - 2  \hspace*{1cm}  d_{l} \leftarrow \frac{ d_k d_l -1 }{d_k + d_l - 2} .
\end{equation}
Since row (and column) $k$ is diagonalized,
the value $d_k$ becomes permanent value
and we remove $v_k$ from the cotree:
$$
T_G \leftarrow T_G - v_k .
$$
The assignments in (\ref{eq:sub1a}) are technically incorrect since $d_k$ is modified
in the first assignment and used in the second.
In our algorithm's pseudo-code,
we will use temporary variables and assign $\alpha \leftarrow  d_k$  and $\beta \leftarrow d_{l}$,
and then assign $d_{k} \leftarrow \alpha + \beta -2$
and
$d_{l} \leftarrow \frac{\alpha \beta -1}{\alpha + \beta -2}$.
To keep notation simple,
in the remainder of this section, we will not do this.
\subparagraph{\bf subcase 1b:} $d_k + d_l = 2$ and  $d_l = 1$. Then the matrix looks like
\begin{center}
$ \left[\begin{array}{ccccccccccc}
&            &     &        &    &  a_1   &   &      &  0     &     & \\
&            &     &        &    & \vdots &   &      & \vdots &     & \\
&            &     &        &    &  a_i   &   &      &  0     &     & \\
&            &     &        &    & \vdots &   &      & \vdots &     & \\
a_1 & \ldots & a_i & \ldots &    & 1 &    &    & 0  & \ldots & a_n \\
&   &        &     &        &    &        &   &      &     &\\
0&\ldots & 0 & \ldots &     & 0  &        &   & 0    & \ldots & 0\\
&   &        &     &        & \vdots      &   &      & \vdots &     & \\
&   &        &     &        & a_n &       &   & 0  & & \\
\end{array}\right]$,
\end{center}
and we are done.
We make the assignments
$$
d_k \leftarrow 0 \hspace*{1cm}  d_{l} \leftarrow 1 \hspace*{1cm}  T_G \leftarrow T_G - v_k
$$
as $d_k$ becomes permanent, and $v_k$ is removed.
\subparagraph{\bf subcase 1c:} $d_k+d_l-2= 0$ and $d_l \neq 1$. Then our matrix looks like
\begin{center}
$ \left[\begin{array}{ccccccccccc}
&            &     &        &    &  a_1   &   &      &  0     &     & \\
&            &     &        &    & \vdots &   &      & \vdots &     & \\
&            &     &        &    &  a_i   &   &      &  0     &     & \\
&            &     &        &    & \vdots &   &      & \vdots &     & \\
a_1 & \ldots & a_i & \ldots &    & d_l    &   &      & 1-d_l  & \ldots & a_n \\
&   &        &     &        &    &        &   &      &     &\\
0&\ldots & 0 & \ldots &    & 1-d_l&       &   & 0 & \ldots & 0\\
&   &        &     &        & \vdots      &   &      & \vdots &     & \\
&   &        &     &        & a_n &       &   & 0  & &
\end{array}\right]$ .
\end{center}
We note that $a_1,\ldots, a_n \in \{0,1\}$ and since $1 - d_l \neq 0$, for each $i \neq k,l$
such that $a_{i l}=1,$ we perform:
\begin{eqnarray}
   R_{i} & \leftarrow & R_{i} - \frac{1}{1- d_l} R_{k} \label{eq:subcase1c} \\
   C_{i} &  \leftarrow & C_{i} - \frac{1}{1- d_l} C_{k} \nonumber
\end{eqnarray}
This annihilates most of row (column) $l$, without changing any other values:
\begin{center}
$ \left[\begin{array}{ccccccccccc}
&            &     &        &    &  0     &   &      &  0     &     & \\
&            &     &        &    & \vdots &   &      & \vdots &     & \\
&            &     &        &    & 0      &   &      &  0     &     & \\
&            &     &        &    & \vdots &   &      & \vdots &     & \\
0& \ldots & 0   & \ldots &    & d_l    &   &      & 1-d_l  & \ldots & 0 \\
&   &        &     &        &    &        &   &      &     &\\
0&\ldots & 0 & \ldots &    & 1-d_l&       &   & 0 & \ldots & 0\\
&   &        &     &        & \vdots      &   &      & \vdots &     & \\
&   &        &     &        & 0 &       &   & 0  & &
\end{array}\right]$ .
\end{center}
The operations
\begin{eqnarray*}
R_{l} & \leftarrow & R_{l} + \frac{1}{2} R_{k} \\
C_{l} & \leftarrow & C_{l} + \frac{1}{2} C_{k}
\end{eqnarray*}
replace the diagonal $d_l$ with one, while the operations
\begin{eqnarray*}
R_{k} & \leftarrow & R_{k} - (1- d_l) R_{l} \\
C_{k} &  \leftarrow & C_{k} - (1- d_l) C_{l}
\end{eqnarray*}
eliminate the two off-diagonal elements, finally giving:
\begin{center}
$ \left[\begin{array}{ccccccccccc}
&            &     &        &    &  0     &   &      &  0     &     & \\
&            &     &        &    & \vdots &   &      & \vdots &     & \\
&            &     &        &    & 0      &   &      &  0     &     & \\
&            &     &        &    & \vdots &   &      & \vdots &     & \\
0& \ldots & 0   & \ldots    &    & 1      &   &      & 0  & \ldots & 0 \\
&   &        &     &        &    &        &   &      &     &\\
0&\ldots & 0 & \ldots &    & 0   &       &   & -(1-d_l)^2 & \ldots & 0\\
&   &        &     &        & \vdots      &   &      & \vdots &     & \\
&   &        &     &        & 0 &       &   & 0  & &
\end{array}\right]$ .
\end{center}
What is different about this subcase is that
both rows $k$ and $l$ have been diagonalized.
We make the following assignments
$$
d_k \leftarrow -(1- d_l )^2
\hspace*{1cm}
d_l \leftarrow 1
\hspace*{1cm}
T_G \leftarrow T_G - v_k
\hspace*{1cm}
T_G \leftarrow T_G - v_l
$$
removing both vertices from $T_G$ and making both variables permanent.
\paragraph{\bf {Case 2:}} $w = \union$.
By Lemma~\ref{lem:siblingrows},
row $k$ and $l$ of the matrix $B$ look like:
\begin{center}
$ \left[\begin{array}{ccccccccccc}
&            &     &        &    &  a_1   &   &      & a_1    &     & \\
&            &     &        &    & \vdots &   &      & \vdots &     & \\
&            &     &        &    &  a_i   &   &      & a_i    &     & \\
&            &     &        &    & \vdots &   &      & \vdots &     & \\
a_1 & \ldots & a_i & \ldots &    & d_l    &   &      & 0   & \ldots & a_n \\
&   &        &     &        &    &        &   &      &     &\\
a_1 & \ldots & a_i & \ldots &    & 0      &   &      & d_k & \ldots & a_n\\
&   &        &     &        & \vdots      &   &      & \vdots &     & \\
&   &        &     &        & a_n &       &   & a_n  & & \\
\end{array}\right],$\\
\end{center}
Similar to Case 1, the row and column operations
\begin{eqnarray*}
R_k \leftarrow R_k - R_{l} \\
C_k \leftarrow C_k - C_{l}
\end{eqnarray*}
give:
\begin{center}
$ \left[\begin{array}{ccccccccccc}
&            &     &        &    &  a_1   &   &      & 0    &     & \\
&            &     &        &    & \vdots &   &      & \vdots &     & \\
&            &     &        &    &  a_i   &   &      & 0    &     & \\
&            &     &        &    & \vdots &   &      & \vdots &     & \\
a_1 & \ldots & a_i & \ldots &    & d_l    &   &      & -d_l  & \ldots & a_n \\
&   &        &     &        &    &        &   &      &     &\\
0 & \ldots & 0 & \ldots &    & -d_l     &   &      & d_k+d_l & \ldots & 0\\
&   &        &     &        & \vdots      &   &      & \vdots &     & \\
&   &        &     &        & a_n &       &   & 0  & & \\
\end{array}\right],$\\
\end{center}
\paragraph{\bf subcase 2a:} $d_k + d_l \neq 0.$
Then the matrix operations
\begin{eqnarray*}
R_{l} \leftarrow R_{l} + \frac{d_l}{d_k + d_l}R_{k} \\
C_{l} \leftarrow C_{l} + \frac{d_l}{d_k + d_l}C_{k}
\end{eqnarray*}
diagonalize the matrix and the following assignments are made:
$$
d_{k} \leftarrow  d_k + d_l \hspace*{1cm} T_G \leftarrow T_G - v_k
\hspace*{1cm}
d_{l} \leftarrow \frac{d_k  d_l}{d_k + d_l}
$$
\paragraph{\bf subcase 2b:}
$d_k + d_l=0,$ and $d_l=0.$
Similar to {\bf subcase 1b}, the matrix is in diagonal form and the following assignments are made:
$$
d_{k} \leftarrow  0
\hspace*{1cm}
T_G \leftarrow T_G - v_k
\hspace*{1cm}
d_{l} \leftarrow 0
$$
\paragraph{\bf subcase 2c:} $d_k + d_l=0,$ and $ d_l\neq 0.$
Since $d_l \neq 0$,
we use operations similar to (\ref{eq:subcase1c})
to annihilate most of rows and columns $l$:
\begin{center}
$ \left[\begin{array}{ccccccccccc}
&            &     &        &    &  0     &   &      &  0     &     & \\
&            &     &        &    & \vdots &   &      & \vdots &     & \\
&            &     &        &    & 0      &   &      &  0     &     & \\
&            &     &        &    & \vdots &   &      & \vdots &     & \\
0& \ldots & 0   & \ldots &    & d_l    &   &      & -d_l  & \ldots & 0 \\
&   &        &     &        &    &        &   &      &     &\\
0&\ldots & 0 & \ldots &    & -d_l&       &   & 0 & \ldots & 0\\
&   &        &     &        & \vdots      &   &      & \vdots &     & \\
&   &        &     &        & 0 &       &   & 0  & &
\end{array}\right]$ .
\end{center}
The operations
\begin{eqnarray*}
R_{k} & \leftarrow & R_{k} +  R_{l} \\
C_{k} & \leftarrow & C_{k} +  C_{l}
\end{eqnarray*}
complete the diagonlization.
The following assignments are made:
$$
d_{k} \leftarrow  -d_l  \hspace*{1cm} d_{l} \leftarrow d_l \hspace*{1cm} T_G \leftarrow T_G - v_k \hspace*{1cm} T_G \leftarrow T_G - v_l
$$
\begin{figure}[t]
\input{fig2.tex}
\end{figure}
\section{Diagonalizing $A + xI$}
\label{sec:diagalgo}
The algorithm in Figure~\ref{algo}
constructs a diagonal matrix $D$,
congruent to $B = A + xI$,
where $A$ is the adjacency matrix of a cograph $G$, and $x \in \Real$.
The algorithm's input is the cotree $T_G$ and $x$.
Note the algorithm does not store the matrix, but
rather only records changes on the diagonal values $d_i$ of $B$,
allowing only $O(n)$ space.

It initializes all entries of $D$ with $x$.
At the beginning of each iteration, the cotree represents
the subgraph induced by vertices whose rows and columns have not yet been diagonalized.
During each iteration, a pair of siblings $\{v_k, v_l \}$
from the cotree is selected, whose existence is guaranteed by Lemma~\ref{lem:twosiblings}.
Note that the algorithm can't assume that either of the diagonal elements $d_k$ and $d_l$
are still $x$.

Each iteration of the loop annihilates either one or two rows (columns),
updating diagonal values wth arithmetic from Section~\ref{sec:diagstep}.
Once a row (column) is diagonalized, those entries never participate
again in row and column operations, and so their values remain unchanged.
When a row (column) corresponding to vertex $v$ has been diagonalized,
the subgraph induced by those vertices whose rows (columns) are undiagonalized,
has been reduced.
Thus $v$ is removed from the cotree.
It is slightly easier to reconstruct the cotree
when leaves of maximum depth are removed,
using the method of Lemma~\ref{lem:cotreeremove}.
Hence we always choose sibling pairs of maximum depth.
The algorithm terminates when all rows and columns have been diagonalized.
Each iteration of \verb+Diagonalize+
takes constant time, so its running time is $O(n)$.

\begin{Thr}
\label{main}
For inputs $T_G$ and $x$, where $T_G$ is the cotree of cograph $G$ having adjacency matrix $A$,
algorithm
\verb+Diagonalize+
computes a diagonal matrix $D$, which is congruent to $A + x I$
using $O(n)$ time and space.
\end{Thr}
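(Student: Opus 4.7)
The plan is to prove Theorem~\ref{main} by maintaining a loop invariant through the \texttt{while} loop of \texttt{Diagonalize} and then separately accounting for the complexity. The invariant I would carry is: at the top of each iteration there exists an implicit (never-stored) real symmetric matrix $B$ congruent to $A+xI$ such that (i) for every vertex $v_i$ already removed from the working cotree, row and column $i$ of $B$ are zero off the diagonal and $B[i,i]$ equals the permanent value $d_i$; (ii) for every vertex $v_i$ still present in the current cotree, the off-diagonal entries of row $i$ of $B$ restricted to the active indices coincide with the adjacency entries of the cograph represented by the current cotree, and $B[i,i]=d_i$. The base case is immediate since initially $B=A+xI$, $d_i=x$ and no vertex has been deleted.

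For the inductive step I would fix the siblings $\{v_k,v_l\}$ of maximum depth chosen by the iteration, with common parent $w$. By Lemmas~\ref{lem:parentofsiblings} and~\ref{lem:siblingrows} the restrictions of rows and columns $k,l$ of $B$ to the active indices have precisely the shape displayed at the start of Section~\ref{sec:diagstep}. The algorithm then executes one of six prescribed sequences of elementary row/column pairs (subcases 1a--1c and 2a--2c). Because every row operation is followed by the same column operation, each sequence preserves congruence to $A+xI$, and the assignments in Figure~\ref{algo} record the resulting diagonal values. What remains in each subcase is to check that after the operations row and column $k$ (and in subcases 1c, 2c also $l$) are zero off the diagonal with the stated values of $d_k$ (and $d_l$), and that the remaining active submatrix still matches the adjacency structure of the reduced cotree. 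The first is immediate from the displays in Section~\ref{sec:diagstep}, and the second is exactly what Lemma~\ref{lem:cotreeremove} encodes.

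The main technical obstacle is in subcases 1c and 2c, where after dealing with rows $k$ and $l$ the algorithm also performs, for every $i\neq k,l$ with $a_{i l}=1$, the pair $R_i\leftarrow R_i-\frac{1}{1-d_l}R_k$ together with its symmetric column operation (and analogously in subcase 2c with $d_l$ replacing $1-d_l$). To preserve the invariant I must verify that these auxiliary operations do not pollute any entry outside row or column $l$. The reason they do not is that at the moment they are performed row $k$ has a single nonzero entry among the active columns, namely $1-d_l$ at position $l$ (its diagonal having just been zeroed), so subtracting a scalar multiple of $R_k$ from $R_i$ changes only $B[i,l]$; symmetrically the column operation changes only $B[l,i]$. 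A careful accounting of this in each of the six subcases is the core of the correctness verification.

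For complexity, each iteration performs $O(1)$ arithmetic on the two diagonal variables $d_k,d_l$ and one cotree update of the form prescribed by Lemma~\ref{lem:cotreeremove}. Maintaining a pointer to a deepest interior node with at least two leaf children, initialized by a single linear-time pass over $T_G$, makes the selection of $\{v_k,v_l\}$ and the cotree update run in amortized constant time per iteration. Since each iteration deletes at least one leaf and the cotree has $O(n)$ nodes, the loop halts in at most $n$ iterations, giving total running time $O(n)$. The only storage used is the cotree itself and the length-$n$ array of diagonals $d_i$, both $O(n)$. Combining the invariant, termination, and these counts yields the theorem.
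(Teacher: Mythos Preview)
Your proposal is correct and follows essentially the same approach as the paper: the paper establishes Theorem~\ref{main} informally by deriving the six subcases in Section~\ref{sec:diagstep} and then describing in Section~\ref{sec:diagalgo} how each iteration diagonalizes one or two rows/columns while the cotree tracks the undiagonalized subgraph. Your loop-invariant formulation is a clean formalization of exactly that argument, and your explicit check that the auxiliary operations in subcases~1c/2c only touch column~$l$ (because row~$k$ has a single nonzero off-diagonal entry at that point) makes precise something the paper leaves to the displayed matrices.
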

It is interesting that for threshold graphs, the cotree is a {\em caterpillar},
as shown in \cite[Cor. 3.2]{Sciriha2011}.
When \verb+Diagonalize+
is given such a cotree, it performs essentially the same arithmetic as
the threshold graph algorithm in \cite{JTT2013}.

\begin{Ex}
In the remainder of this section we apply Algorithm \verb+Diagonalize+
to the cograph in Figure~\ref{fig:cotree} with $x=0$.
In our figures, diagonal values $d_i$ appear under the vertex $v_i$ in the cotree.
Initially, all $d_i$ will be $x = 0$.
Sibling pairs appear in red.
Dashed edges indicate vertices about to be removed from the cotree.
To follow this example, the reader need not be concerned with labels of vertices,
but simply check that each cotree produces the next one.
\end{Ex}
In the first iteration of the algorithm,
siblings $\{v_k,v_l\}$ are chosen of maximum depth.
Since $d_k = \alpha = 0$ and $d_l = \beta = 0$,
{\bf subcase~2b}  occurs.
\begin{figure}[h]
       \begin{minipage}[c]{0.45 \linewidth}
\begin{tikzpicture}
  [scale=1,auto=left,every node/.style={circle,scale=0.9}]
  \node[draw,circle,fill=red,label=below:$0$] (p) at (-2,3) {};
  \node[draw,circle,fill=black,label=below:$0$] (o) at (3,4) {};
  \node[draw,circle,fill=black,label=below:$0$] (n) at (2,4) {};
   \node[draw,circle,fill=black,label=below:$0$] (q) at (1,4) {};
  \node[draw, circle, fill=blue!10, inner sep=0] (m) at (2,5) {$\join$};
  \node[draw,circle,fill=black,label=below:$0$] (l) at (3,5) {};
  \node[draw,circle,fill=black,label=below:$0$] (k) at (3,6) {};
  \node[draw, circle, fill=blue!10, inner sep=0] (j) at (2,6) {$\union$};
  \node[draw,circle,fill=blue!10, inner sep=0] (h) at (2,7) {$\join$};
  \node[draw, circle, fill=blue!10, inner sep=0] (g) at (0.85,6) {$\union$};
  \node[draw,circle,fill=black,label=below:$0$] (f) at  (0.9,5) {};
  \node[draw,circle,fill=blue!10, inner sep=0] (a) at (0,5) {$\join$};
  \node[draw, circle, fill=blue!10, inner sep=0] (b) at (-1,4) {$\union$};
  \node[draw,circle,fill=black,label=below:$0$] (c) at (0,4) {};
  \node[draw,circle,fill=red,label=below:$0$] (d) at (-1,3) {};
  \path (a) edge node[left]{} (b)
        (a) edge node[below]{} (c)
        (b) edge node[left]{} (d)
        (f) edge node[right]{}(g)
        (g) edge node[left]{}(a)
        (h) edge node[right]{}(j)
        (h) edge node[left]{}(g)
        (h) edge node[left]{}(k)
        (j) edge node[right]{}(l)
        (j) edge node[below]{}(m)
        (m) edge node[below]{}(n)
        (m) edge node[right]{}(o)
        (b) edge [dashed]node[left]{} (p)
        (m) edge node[left]{} (q);
\end{tikzpicture}
\caption{}
       \label{fig3}
       \end{minipage}\hfill
       \begin{minipage}[c]{0.45 \linewidth}
\begin{tikzpicture}
  [scale=1,auto=left,every node/.style={circle,scale=0.9}]
  \node[draw,circle,fill=black,label=below:$0$] (p) at (-2,3) {};
  \node[draw,circle,fill=black,label=below:$0$] (o) at (3,4) {};
  \node[draw,circle,fill=black,label=below:$0$] (n) at (2,4) {};
   \node[draw,circle,fill=black,label=below:$0$] (q) at (1,4) {};
  \node[draw, circle, fill=blue!10, inner sep=0] (m) at (2,5) {$\join$};
  \node[draw,circle,fill=black,label=below:$0$] (l) at (3,5) {};
  \node[draw,circle,fill=black,label=below:$0$] (k) at (3,6) {};
  \node[draw, circle, fill=blue!10, inner sep=0] (j) at (2,6) {$\union$};
  \node[draw,circle,fill=blue!10, inner sep=0] (h) at (2,7) {$\join$};
  \node[draw, circle, fill=blue!10, inner sep=0] (g) at (0.85,6) {$\union$};
  \node[draw,circle,fill=black,label=below:$0$] (f) at  (0.9,5) {};
  \node[draw,circle,fill=blue!10, inner sep=0] (a) at (0,5) {$\join$};
  \node[draw, circle, fill=black, label=below:$0$] (b) at (-1,4) {};
  \node[draw,circle,fill=black,label=below:$0$] (c) at (0,4) {};
  
  \path (a) edge node[left]{} (b)
        (a) edge node[below]{} (c)
        
        (f) edge node[right]{}(g)
        (g) edge node[left]{}(a)
        (h) edge node[right]{}(j)
        (h) edge node[left]{}(g)
        (h) edge node[left]{}(k)
        (j) edge node[right]{}(l)
        (j) edge node[below]{}(m)
        (m) edge node[below]{}(n)
        (m) edge node[right]{}(o)
        
        (m) edge node[left]{} (q);
\end{tikzpicture}
       \caption{}
       \label{fig4}
        \end{minipage}
\end{figure}
The assignments
$$
d_k   \leftarrow 0  \hspace*{1cm} d_l   \leftarrow  0
$$
are made, represented in Figure~\ref{fig3}.
Figure~\ref{fig4} depicts the cotree after vertex $v_k$ is removed
and $v_l$ is relocated under its parent's parent using the
rules in Lemma~\ref{lem:cotreeremove}.

In the next step,
a sibling pair $\{v_k, v_l\}$ of depth three is chosen.
Since their parent is $\join$,
and $d_k = \alpha=0$ and $d_l=\beta=0$,
{\bf subcase~1a}
is executed and the following
assignments are made:
$$
d_k    \leftarrow  -2 \hspace*{1cm} d_l   \leftarrow  \frac{1}{2}
$$
Figure~\ref{fig5}
shows the cotree after these assignments,
and Figure \ref{fig6} shows the cotree with $v_k$
removed and $v_l$ relocated.
%
 \begin{figure}[H]
        \begin{minipage}[c]{0.45 \linewidth}
\begin{tikzpicture}
  [scale=1,auto=left,every node/.style={circle,scale=0.9}]
  \node[draw,circle,fill=black,label=below:$0$] (p) at (-2,3) {};
  \node[draw,circle,fill=black,label=below:$0$] (o) at (3,4) {};
  \node[draw,circle,fill=black,label=below:$0$] (n) at (2,4) {};
   \node[draw,circle,fill=black,label=below:$0$] (q) at (1,4) {};
  \node[draw, circle, fill=blue!10, inner sep=0] (m) at (2,5) {$\join$};
  \node[draw,circle,fill=black,label=below:$0$] (l) at (3,5) {};
  \node[draw,circle,fill=black,label=below:$0$] (k) at (3,6) {};
  \node[draw, circle, fill=blue!10, inner sep=0] (j) at (2,6) {$\union$};
  \node[draw,circle,fill=blue!10, inner sep=0] (h) at (2,7) {$\join$};
  \node[draw, circle, fill=blue!10, inner sep=0] (g) at (0.85,6) {$\union$};
  \node[draw,circle,fill=black,label=below:$0$] (f) at  (0.9,5) {};
  \node[draw,circle,fill=blue!10, inner sep=0] (a) at (0,5) {$\join$};
  \node[draw, circle, fill=red, label=below:$-2$] (b) at (-1,4) {};
  \node[draw,circle,fill=red,label=below:$\frac{1}{2}$] (c) at (0,4) {};
  
  \path (a) edge [dashed]node[left]{} (b)
        (a) edge node[below]{} (c)
        
        (f) edge node[right]{}(g)
        (g) edge node[left]{}(a)
        (h) edge node[right]{}(j)
        (h) edge node[left]{}(g)
        (h) edge node[left]{}(k)
        (j) edge node[right]{}(l)
        (j) edge node[below]{}(m)
        (m) edge node[below]{}(n)
        (m) edge node[right]{}(o)
        
        (m) edge node[left]{} (q);
\end{tikzpicture}
\caption{}
       \label{fig5}
       \end{minipage}\hfill
       \begin{minipage}[c]{0.45 \linewidth}
\begin{tikzpicture}
  [scale=1,auto=left,every node/.style={circle,scale=0.9}]
  \node[draw,circle,fill=black,label=below:$0$] (p) at (-2,3) {};
  \node[draw,circle,fill=black,label=below:$0$] (o) at (3,4) {};
  \node[draw,circle,fill=black,label=below:$0$] (n) at (2,4) {};
   \node[draw,circle,fill=black,label=below:$0$] (q) at (1,4) {};
  \node[draw, circle, fill=blue!10, inner sep=0] (m) at (2,5) {$\join$};
  \node[draw,circle,fill=black,label=below:$0$] (l) at (3,5) {};
  \node[draw,circle,fill=black,label=below:$0$] (k) at (3,6) {};
  \node[draw, circle, fill=blue!10, inner sep=0] (j) at (2,6) {$\union$};
  \node[draw,circle,fill=blue!10, inner sep=0] (h) at (2,7) {$\join$};
  \node[draw, circle, fill=blue!10, inner sep=0] (g) at (0.85,6) {$\union$};
  \node[draw,circle,fill=black,label=below:$0$] (f) at  (0.9,5) {};
  \node[draw,circle,fill=black, label=below:$\frac{1}{2}$] (a) at (0,5) {};
  \node[draw, circle, fill=black, label=below:$-2$] (b) at (-1,3) {};
  \path 
        (f) edge node[right]{}(g)
        (g) edge node[left]{}(a)
        (h) edge node[right]{}(j)
        (h) edge node[left]{}(g)
        (h) edge node[left]{}(k)
        (j) edge node[right]{}(l)
        (j) edge node[below]{}(m)
        (m) edge node[below]{}(n)
        (m) edge node[right]{}(o)
        
        (m) edge node[left]{} (q);
\end{tikzpicture}
       \caption{}
       \label{fig6}
        \end{minipage}
 \end{figure}
Next, another depth three
sibling pair $\{v_k, v_l \}$ is chosen.
Since $d_k = \alpha=0$ and $d_l=\beta=0$, {\bf subcase~1a} is taken
and assignments
$$
d_k    \leftarrow  -2  \hspace*{1cm} d_l   \leftarrow  \frac{1}{2}
$$
made.
Vertex $v_k$ is removed from the cotree
as shown in Figure \ref{fig7}, making
the value $d_k= -2$ permanent.
Note $v_l$ does not move per the
rules in Lemma~\ref{lem:cotreeremove}.
%
 \begin{figure}[h!]
        \begin{minipage}[c]{0.45 \linewidth}
\begin{tikzpicture}
  [scale=1,auto=left,every node/.style={circle,scale=0.9}]
  \node[draw,circle,fill=black,label=below:$0$] (p) at (-2,3) {};
  \node[draw,circle,fill=black,label=below:$0$] (o) at (3,4) {};
  \node[draw,circle,fill=red,label=below:$\frac{1}{2}$] (n) at (2,4) {};
   \node[draw,circle,fill=red,label=below:$-2$] (q) at (1,4) {};
  \node[draw, circle, fill=blue!10, inner sep=0] (m) at (2,5) {$\join$};
  \node[draw,circle,fill=black,label=below:$0$] (l) at (3,5) {};
  \node[draw,circle,fill=black,label=below:$0$] (k) at (3,6) {};
  \node[draw, circle, fill=blue!10, inner sep=0] (j) at (2,6) {$\union$};
  \node[draw,circle,fill=blue!10, inner sep=0] (h) at (2,7) {$\join$};
  \node[draw, circle, fill=blue!10, inner sep=0] (g) at (0.85,6) {$\union$};
  \node[draw,circle,fill=black,label=below:$0$] (f) at  (0.9,5) {};
  \node[draw,circle,fill=black, label=below:$\frac{1}{2}$] (a) at (0,5) {};
  \node[draw, circle, fill=black, label=below:$-2$] (b) at (-1,3) {};
  \path 
        (f) edge node[right]{}(g)
        (g) edge node[left]{}(a)
        (h) edge node[right]{}(j)
        (h) edge node[left]{}(g)
        (h) edge node[left]{}(k)
        (j) edge node[right]{}(l)
        (j) edge node[below]{}(m)
        (m) edge node[below]{}(n)
        (m) edge node[right]{}(o)
        
        (m) edge [dashed]node[left]{} (q);
\end{tikzpicture}
\caption{}
       \label{fig7}
       \end{minipage}\hfill
       \begin{minipage}[c]{0.45 \linewidth}
\begin{tikzpicture}
  [scale=1,auto=left,every node/.style={circle,scale=0.9}]
  \node[draw,circle,fill=black,label=below:$0$] (p) at (-2,3) {};
  \node[draw,circle,fill=red,label=below:$\frac{2}{3}$] (o) at (3,4) {};
  \node[draw,circle,fill=red,label=below:$-\frac{3}{2}$] (n) at (2,4) {};
   \node[draw,circle,fill=black,label=below:$-2$] (q) at (0,3) {};
  \node[draw, circle, fill=blue!10, inner sep=0] (m) at (2,5) {$\join$};
  \node[draw,circle,fill=black,label=below:$0$] (l) at (3,5) {};
  \node[draw,circle,fill=black,label=below:$0$] (k) at (3,6) {};
  \node[draw, circle, fill=blue!10, inner sep=0] (j) at (2,6) {$\union$};
  \node[draw,circle,fill=blue!10, inner sep=0] (h) at (2,7) {$\join$};
  \node[draw, circle, fill=blue!10, inner sep=0] (g) at (0.85,6) {$\union$};
  \node[draw,circle,fill=black,label=below:$0$] (f) at  (0.9,5) {};
  \node[draw,circle,fill=black, label=below:$\frac{1}{2}$] (a) at (0,5) {};
  \node[draw, circle, fill=black, label=below:$-2$] (b) at (-1,3) {};
  \path 
        (f) edge node[right]{}(g)
        (g) edge node[left]{}(a)
        (h) edge node[right]{}(j)
        (h) edge node[left]{}(g)
        (h) edge node[left]{}(k)
        (j) edge node[right]{}(l)
        (j) edge node[below]{}(m)
        (m) edge [dashed]node[below]{}(n)
        (m) edge node[right]{}(o);
\end{tikzpicture}
       \caption{ }
       \label{fig8}
        \end{minipage}
 \end{figure}

Depth three siblings $\{v_k, v_l \}$ are selected again.
Since $d_k=\alpha = \frac{1}{2}$ and $d_l=\beta = 0$,
{\bf subcase~1a} is again executed and the assignments
$$
d_k    \leftarrow  -\frac{3}{2} \hspace*{1cm} d_l   \leftarrow  \frac{2}{3}
$$
are made as shown in Figure \ref{fig8}.
Vertex $v_k$ will be removed from the cotree,
and $v_l$ relocated to its parent's parent, as Figure \ref{fig9} shows.

Next, a depth two pair $\{v_k, v_l\}$ is chosen.
Since $ d_k= \alpha = \frac{1}{2},$ and $d_l= \beta = 0$,
{\bf subcase~2a} is applied,
and the following assignments are made:
$$
d_k \leftarrow  \frac{1}{2} \hspace*{1cm}  d_l   \leftarrow  0
$$
Their values don't change, as indicated in Figure \ref{fig10}.
Vertex $v_k$ is removed from the cotree,
and vertex $v_l$ is relocated
as shown in Figure \ref{fig11}.
Next, a depth two pair $\{v_k, v_l\}$ is selected.
Since $ d_k= \alpha = \frac{2}{3}$ and $d_l= \beta = 0.$
the {\bf subcase~2a} is applied again.
%
 \begin{figure}[H]
        \begin{minipage}[c]{0.45 \linewidth}
\begin{tikzpicture}
  [scale=1,auto=left,every node/.style={circle,scale=0.9}]
  \node[draw,circle,fill=black,label=below:$0$] (p) at (-2,3) {};
  
  \node[draw,circle,fill=black,label=below:$-\frac{3}{2}$] (n) at (1,3) {};
   \node[draw,circle,fill=black,label=below:$-2$] (q) at (0,3) {};
  \node[draw, circle, fill=black, label=below:$\frac{2}{3}$] (m) at (2,5) {};
  \node[draw,circle,fill=black,label=below:$0$] (l) at (3,5) {};
  \node[draw,circle,fill=black,label=below:$0$] (k) at (3,6) {};
  \node[draw, circle, fill=blue!10, inner sep=0] (j) at (2,6) {$\union$};
  \node[draw,circle,fill=blue!10, inner sep=0] (h) at (2,7) {$\join$};
  \node[draw, circle, fill=blue!10, inner sep=0] (g) at (0.85,6) {$\union$};
  \node[draw,circle,fill=black,label=below:$0$] (f) at  (0.9,5) {};
  \node[draw,circle,fill=black, label=below:$\frac{1}{2}$] (a) at (0,5) {};
  \node[draw, circle, fill=black, label=below:$-2$] (b) at (-1,3) {};
  \path 
        (f) edge node[right]{}(g)
        (g) edge node[left]{}(a)
        (h) edge node[right]{}(j)
        (h) edge node[left]{}(g)
        (h) edge node[left]{}(k)
        (j) edge node[right]{}(l)
        (j) edge node[below]{}(m);
\end{tikzpicture}
\caption{}
       \label{fig9}
       \end{minipage}\hfill
       \begin{minipage}[c]{0.45 \linewidth}
\begin{tikzpicture}
  [scale=1,auto=left,every node/.style={circle,scale=0.9}]
  \node[draw,circle,fill=black,label=below:$0$] (p) at (-2,3) {};
  
  \node[draw,circle,fill=black,label=below:$-\frac{3}{2}$] (n) at (1,3) {};
   \node[draw,circle,fill=black,label=below:$-2$] (q) at (0,3) {};
  \node[draw, circle, fill=black, label=below:$\frac{2}{3}$] (m) at (2,5) {};
  \node[draw,circle,fill=black,label=below:$0$] (l) at (3,5) {};
  \node[draw,circle,fill=black,label=below:$0$] (k) at (3,6) {};
  \node[draw, circle, fill=blue!10, inner sep=0] (j) at (2,6) {$\union$};
  \node[draw,circle,fill=blue!10, inner sep=0] (h) at (2,7) {$\join$};
  \node[draw, circle, fill=blue!10, inner sep=0] (g) at (0.85,6) {$\union$};
  \node[draw,circle,fill=red,label=below:$0$] (f) at  (0.9,5) {};
  \node[draw,circle,fill=red, label=below:$\frac{1}{2}$] (a) at (0,5) {};
  \node[draw, circle, fill=black, label=below:$-2$] (b) at (-1,3) {};
  \path 
        (f) edge node[right]{}(g)
        (g) edge [dashed]node[left]{}(a)
        (h) edge node[right]{}(j)
        (h) edge node[left]{}(g)
        (h) edge node[left]{}(k)
        (j) edge node[right]{}(l)
        (j) edge node[below]{}(m);
\end{tikzpicture}
       \caption{}
       \label{fig10}
        \end{minipage}
 \end{figure}
%
The assignments are made
$$
d_k    \leftarrow   \frac{2}{3} \hspace*{1cm} d_l   \leftarrow   0 .
$$
leaving the $d_k$ and $d_l$ unchanged,
as shown in Figure \ref{fig12}.
Vertex $v_k$ is removed from the cotree and $v_l$ relocated
to the cotree's root, as shown in Figure \ref{fig13}.
%
  \begin{figure}[H]
         \begin{minipage}[c]{0.45 \linewidth}
\begin{tikzpicture}
  [scale=1,auto=left,every node/.style={circle,scale=0.9}]
  \node[draw,circle,fill=black,label=below:$0$] (p) at (-2,3) {};
  
  \node[draw,circle,fill=black,label=below:$-\frac{3}{2}$] (n) at (1,3) {};
   \node[draw,circle,fill=black,label=below:$-2$] (q) at (0,3) {};
  \node[draw, circle, fill=black, label=below:$\frac{2}{3}$] (m) at (2,5) {};
  \node[draw,circle,fill=black,label=below:$0$] (l) at (3,5) {};
  \node[draw,circle,fill=black,label=below:$0$] (k) at (3,6) {};
  \node[draw, circle, fill=blue!10, inner sep=0] (j) at (2,6) {$\union$};
  \node[draw,circle,fill=blue!10, inner sep=0] (h) at (2,7) {$\join$};
  \node[draw, circle, fill=black, label=below:$0$] (g) at (0.85,6) {};
  
  \node[draw,circle,fill=black, label=below:$\frac{1}{2}$] (a) at (2,3) {};
  \node[draw, circle, fill=black, label=below:$-2$] (b) at (-1,3) {};
  \path

        (h) edge node[right]{}(j)
        (h) edge node[left]{}(g)
        (h) edge node[left]{}(k)
        (j) edge node[right]{}(l)
        (j) edge node[below]{}(m);
\end{tikzpicture}
\caption{}
       \label{fig11}
       \end{minipage}\hfill
       \begin{minipage}[c]{0.45 \linewidth}
\begin{tikzpicture}
  [scale=1,auto=left,every node/.style={circle,scale=0.9}]
  \node[draw,circle,fill=black,label=below:$0$] (p) at (-2,3) {};
  
  \node[draw,circle,fill=black,label=below:$-\frac{3}{2}$] (n) at (1,3) {};
   \node[draw,circle,fill=black,label=below:$-2$] (q) at (0,3) {};
  \node[draw, circle, fill=red, label=below:$\frac{2}{3}$] (m) at (2,5) {};
  \node[draw,circle,fill=red,label=below:$0$] (l) at (3,5) {};
  \node[draw,circle,fill=black,label=below:$0$] (k) at (3,6) {};
  \node[draw, circle, fill=blue!10, inner sep=0] (j) at (2,6) {$\union$};
  \node[draw,circle,fill=blue!10, inner sep=0] (h) at (2,7) {$\join$};
  \node[draw, circle, fill=black, label=below:$0$] (g) at (0.85,6) {};
  
  \node[draw,circle,fill=black, label=below:$\frac{1}{2}$] (a) at (2,3) {};
  \node[draw, circle, fill=black, label=below:$-2$] (b) at (-1,3) {};
  \path

        (h) edge node[right]{}(j)
        (h) edge node[left]{}(g)
        (h) edge node[left]{}(k)
        (j) edge node[right]{}(l)
        (j) edge [dashed]node[below]{}(m);
\end{tikzpicture}
       \caption{}
       \label{fig12}
        \end{minipage}
  \end{figure}
%
In the penultimate step,
a depth one pair $\{v_k, v_l \}$ is chosen.
Since $ d_k= \alpha = 0$ and $d_l= \beta = 0$,
{\bf subcase~1a} is applied,
and assignments
$$
d_k    \leftarrow  -2 \hspace*{1cm} d_l  \leftarrow  \frac{1}{2}
$$
are made, as shown in Figure~\ref{fig14}.
%
  \begin{figure}[H]
         \begin{minipage}[c]{0.45 \linewidth}
\begin{tikzpicture}
  [scale=1,auto=left,every node/.style={circle,scale=0.9}]
  \node[draw,circle,fill=black,label=below:$0$] (p) at (-2,3) {};
  
  \node[draw,circle,fill=black,label=below:$-\frac{3}{2}$] (n) at (1,3) {};
   \node[draw,circle,fill=black,label=below:$-2$] (q) at (0,3) {};
  \node[draw, circle, fill=black, label=below:$\frac{2}{3}$] (m) at (3,3) {};
 
  \node[draw,circle,fill=black,label=below:$0$] (k) at (3,6) {};
  \node[draw, circle, fill=black, label=below:$0$] (j) at (2,6) {};
  \node[draw,circle,fill=blue!10, inner sep=0] (h) at (2,7) {$\join$};
  \node[draw, circle, fill=black, label=below:$0$] (g) at (0.85,6) {};
  
  \node[draw,circle,fill=black, label=below:$\frac{1}{2}$] (a) at (2,3) {};
  \node[draw, circle, fill=black, label=below:$-2$] (b) at (-1,3) {};
  \path         
        (h) edge node[right]{}(j)
        (h) edge node[left]{}(g)
        (h) edge node[left]{}(k);
\end{tikzpicture}
\caption{}
       \label{fig13}
       \end{minipage}\hfill
       \begin{minipage}[c]{0.45 \linewidth}
\begin{tikzpicture}
  [scale=1,auto=left,every node/.style={circle,scale=0.9}]
  \node[draw,circle,fill=black,label=below:$0$] (p) at (-2,3) {};
  
  \node[draw,circle,fill=black,label=below:$-\frac{3}{2}$] (n) at (1,3) {};
   \node[draw,circle,fill=black,label=below:$-2$] (q) at (0,3) {};
  \node[draw, circle, fill=black, label=below:$\frac{2}{3}$] (m) at (3,3) {};
 
  \node[draw,circle,fill=black,label=below:$0$] (k) at (3,6) {};
  \node[draw, circle, fill=red, label=below:$\frac{1}{2}$] (j) at (2,6) {};
  \node[draw,circle,fill=blue!10, inner sep=0] (h) at (2,7) {$\join$};
  \node[draw, circle, fill=red, label=below:$-2$] (g) at (0.85,6) {};
  
  \node[draw,circle,fill=black, label=below:$\frac{1}{2}$] (a) at (2,3) {};
  \node[draw, circle, fill=black, label=below:$-2$] (b) at (-1,3) {};
  \path        
        (h) edge node[right]{}(j)
        (h) edge [dashed]node[left]{}(g)
        (h) edge node[left]{}(k);
\end{tikzpicture}
       \caption{}
       \label{fig14}
        \end{minipage}
  \end{figure}
%
The final sibling pair
$\{v_k, v_l \}$ is chosen.
Since $ d_k= \alpha = \frac{1}{2} $ and $d_l= \beta = 0$, {\bf subcase~1a}
is applied and
$$
 d_k   \leftarrow  -\frac{3}{2} \hspace*{1cm} d_l   \leftarrow  \frac{2}{3}
$$
as shown in Figure \ref{fig15}.
When we remove $v_k$ using
Lemma~\ref{lem:cotreeremove}, the remaining cotree is $v_l$.
The algorithm stops, and the final diagonal
is shown in Figure \ref{fig16}.
 \begin{figure}[H]
        \begin{minipage}[c]{0.45 \linewidth}
\begin{tikzpicture}
  [scale=1,auto=left,every node/.style={circle,scale=0.9}]
  \node[draw,circle,fill=black,label=below:$0$] (p) at (-2,3) {};
  
  \node[draw,circle,fill=black,label=below:$-\frac{3}{2}$] (n) at (1,3) {};
   \node[draw,circle,fill=black,label=below:$-2$] (q) at (0,3) {};
  \node[draw, circle, fill=black, label=below:$\frac{2}{3}$] (m) at (3,3) {};
 
  \node[draw,circle,fill=red,label=below:$\frac{2}{3}$] (k) at (3,6) {};
  \node[draw, circle, fill=red, label=below:$-\frac{3}{2}$] (j) at (2,6) {};
  \node[draw,circle,fill=blue!10, inner sep=0] (h) at (2,7) {$\join$};
  \node[draw, circle, fill=black, label=below:$-2$] (g) at (4,3) {};
  
  \node[draw,circle,fill=black, label=below:$\frac{1}{2}$] (a) at (2,3) {};
  \node[draw, circle, fill=black, label=below:$-2$] (b) at (-1,3) {};
  \path

        (h) edge [dashed]node[right]{}(j)
        
        (h) edge node[left]{}(k);
\end{tikzpicture}
\caption{}
       \label{fig15}
       \end{minipage}\hfill
       \begin{minipage}[c]{0.45 \linewidth}
\begin{tikzpicture}
  [scale=1,auto=left,every node/.style={circle,scale=0.9}]
  \node[draw,circle,fill=black,label=below:$0$] (p) at (-2,3) {};
  
  \node[draw,circle,fill=black,label=below:$-\frac{3}{2}$] (n) at (1,3) {};
   \node[draw,circle,fill=black,label=below:$-2$] (q) at (0,3) {};
  \node[draw, circle, fill=black, label=below:$\frac{2}{3}$] (m) at (3,3) {};
 
  \node[draw,circle,fill=black,label=below:$\frac{2}{3}$] (k) at (3,7) {};
  \node[draw, circle, fill=black, label=below:$-\frac{3}{2}$] (j) at (5,3) {};
  \node[draw, circle, fill=black, label=below:$-2$] (g) at (4,3) {};
  
  \node[draw,circle,fill=black, label=below:$\frac{1}{2}$] (a) at (2,3) {};
  \node[draw, circle, fill=black, label=below:$-2$] (b) at (-1,3) {};
\end{tikzpicture}
       \caption{}
       \label{fig16}
        \end{minipage}
 \end{figure}
%
%
\section{Locating eigenvalues}
\label{sec:locating}
As an application, we can compute
in $O(n)$ time the number of eigenvalues of a cograph in a given interval,
as was done in \cite{JT2011} for trees and in \cite{JTT2013}
for threshold graphs.
The following theorem is called
Sylvester's Law of Inertia \cite[p. 336]{Bradley1975}.

\begin{Thr}
\label{thr:syl} Two $n \times n$ real symmetric matrices are congruent
if and only if they have the same number of positive eigenvalues
and the same number of negative eigenvalues.
\end{Thr}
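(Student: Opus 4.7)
The plan is to reduce both directions to a canonical congruence form $\mathrm{diag}(I_p,-I_q,0_r)$, where $p$, $q$, $r$ are the numbers of positive, negative, and zero eigenvalues. The two essential tools are the spectral theorem for real symmetric matrices and a subspace characterization of $p$ that is manifestly a congruence invariant.

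First I would handle the backward direction. Given a real symmetric $S$ with eigenvalues $\lambda_1,\ldots,\lambda_n$, the spectral theorem produces an orthogonal matrix $Q$ with $Q^T S Q$ diagonal. Conjugating by a permutation matrix to list the positive eigenvalues first, then the negative, then the zero ones, and then conjugating by the diagonal matrix $D$ whose entries are $|\lambda_i|^{-1/2}$ on nonzero eigenvalues and $1$ on the zeros, shows $S$ is congruent to the canonical form $J(S) = \mathrm{diag}(I_p,-I_q,0_r)$. If $R$ and $S$ share the same signature $(p,q,r)$, then $J(R)=J(S)$, and transitivity of congruence gives $R\sim S$.

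For the forward direction I would use the following max-dimension characterization: $p(S)$ equals the maximum dimension of a subspace $V\subseteq \Real^n$ on which $x\mapsto x^T S x$ is strictly positive for every nonzero $x\in V$. One direction is immediate from the spectral theorem, taking $V$ to be the span of eigenvectors for positive eigenvalues. For the other, let $W$ be the span of eigenvectors for nonpositive eigenvalues, so $\dim W = n-p(S)$ and $x^T S x\leq 0$ on $W$; any subspace of dimension exceeding $p(S)$ meets $W$ nontrivially, producing a nonzero vector on which the form would be both strictly positive and nonpositive. Now suppose $R=P^T S P$ with $P$ nonsingular. For a positive subspace $V$ of $S$ with $\dim V = p(S)$, the image $P^{-1}V$ has the same dimension and satisfies $(P^{-1}y)^T R (P^{-1}y) = y^T S y > 0$ for nonzero $y\in V$, so it is a positive subspace for $R$, giving $p(R)\geq p(S)$. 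The same argument applied to $S=(P^{-1})^T R P^{-1}$ yields $p(R)=p(S)$, and replacing $R,S$ by $-R,-S$ gives equality of the negative counts; the zero counts then match by subtraction from $n$.

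The main obstacle is the subspace-maximality claim in the forward direction: one has to produce a clean dimension-count forcing every $(p(S)+1)$-dimensional subspace to intersect the $S$-nonpositive eigenspace. Once this congruence-invariant characterization of $p(S)$ is in hand, the theorem falls out from the spectral theorem and elementary diagonal scaling.
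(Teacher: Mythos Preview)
Your argument is correct and is essentially the standard textbook proof of Sylvester's Law of Inertia: reduce to the canonical form $\mathrm{diag}(I_p,-I_q,0_r)$ via the spectral theorem and diagonal scaling, and establish invariance of $p$ (hence of $q$ and $r$) through the variational description of $p(S)$ as the maximal dimension of a subspace on which the form $x\mapsto x^{T}Sx$ is positive definite. The dimension count you use to bound such subspaces is the right one, and your transport of positive subspaces under $P^{-1}$ is computed correctly from $R=P^{T}SP$.

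The paper, however, does not prove this theorem at all: it simply states it as the classical Sylvester's Law of Inertia and gives a textbook citation. So there is no alternative approach to compare against; you have supplied a full proof where the paper invokes the result as known.
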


The proof of the following theorem is similar to Theorem~3 in \cite{JTT2013},
and is based on Theorem~\ref{thr:syl}.
\begin{Thr}
\label{thr:mainA}
Let $D=[d_1, d_2, \ldots, d_n]$ be the diagonal returned by
\verb+Diagonalize+$(T_G , -a)$,
and assume $D$ has
$k_{+}$ positive values, $k_{0}$ zeros, and $k_{-}$ negative values.
\begin{my_description}
\item[{\em i}]
The number of eigenvalues of $G$ that are greater than $a$ is exactly $k_{+}$.
\item[{\em ii}]
The number of eigenvalues of $G$ that are less than $a$ is exactly $k_{-}$.
\item[{\em iii}]
The multiplicity of $a$ is $k_{0}$.
\end{my_description}
\end{Thr}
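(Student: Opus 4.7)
The plan is to combine Theorem~\ref{main} with Sylvester's Law of Inertia (Theorem~\ref{thr:syl}) and then translate the inertia of the shifted matrix $A - aI$ into statements about the eigenvalues of $A$ itself.

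First, I would invoke Theorem~\ref{main} with input $x = -a$ to conclude that the diagonal matrix $D = \mathrm{diag}(d_1,\ldots,d_n)$ returned by \verb+Diagonalize+$(T_G,-a)$ is congruent to $A + (-a)I = A - aI$. Since both $D$ and $A - aI$ are real symmetric, Theorem~\ref{thr:syl} applies, so $D$ and $A - aI$ share the same number of positive, zero, and negative eigenvalues. Because $D$ is diagonal, its eigenvalues are exactly its diagonal entries, so these three counts are $k_+$, $k_0$, and $k_-$, respectively.

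Next I would translate these counts back to the spectrum of $A$. If $\lambda_1,\ldots,\lambda_n$ are the eigenvalues of $A$, the eigenvalues of $A - aI$ are $\lambda_1 - a,\ldots,\lambda_n - a$. Hence $\lambda_i - a > 0$ iff $\lambda_i > a$, $\lambda_i - a = 0$ iff $\lambda_i = a$, and $\lambda_i - a < 0$ iff $\lambda_i < a$. Reading off the three cases gives parts (i), (ii), and (iii) of the theorem simultaneously.

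There is no real obstacle here: once Theorem~\ref{main} has been established (which is the substantive content of the paper so far), this result is essentially a direct corollary of Sylvester's law together with the observation that shifting by $-aI$ shifts the spectrum by $-a$. The only thing to check is that \verb+Diagonalize+ may indeed be called with the real value $-a$ (the algorithm is stated for arbitrary $x \in \Real$), and that the output is genuinely congruent, not merely similar — which is exactly what Theorem~\ref{main} asserts, since the row/column operation pairs in Section~\ref{sec:diagstep} correspond to multiplication on the left and right by a nonsingular matrix and its transpose.
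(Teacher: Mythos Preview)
Your proof is correct and matches the paper's approach: the paper does not spell out a proof but simply remarks that it ``is similar to Theorem~3 in \cite{JTT2013}, and is based on Theorem~\ref{thr:syl},'' which is precisely the Sylvester-plus-spectral-shift argument you give. There is nothing to add.
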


Note that when we use \verb+Diagonalize+$(T_G , x)$
and obtain $D = [d_1, d_2, \ldots, d_n]$, the order
of the $d_i$'s depends on the order in which maximum depth sibling pairs are selected.
It is not clear whether the values in $D$ are invariant, but their signs certainly are.

\begin{Ex}
\label{ex:two}
As Figure \ref{fig16} indicates,
\verb+Diagonalize+$(T_G , 0)$,
produces three positive entries, five negative entries and one zero.
Therefore $G$ has 3 positive eigenvalues, 5 negative eigenvalues and $0$ is an eigenvalue
with multiplicity one.
\end{Ex}

\begin{Ex}
\label{ex:three}
One can check that \verb+Diagonalize+$(T_G , 1)$,
produces a diagonal with multiset
$\{2, -\frac{1}{2}, 0, 0, 2, 2, -1, -\frac{1}{4}, 1 \}$.
Since there are four positive entries, three negative entries and two zeros,
it follows that there are four eigenvalues greater than -1,
three less than -1, and -1 has multiplicity two.
\end{Ex}

Assume $a < b$, and let $k_{+}$
be the number of positive values in the diagonal of \verb+Diagonalize+$(T_G , -a)$,
and let $j_+$ be number of positive values in the diagonal of
\verb+Diagonalize+$(T_G , -b)$.
Then $(a, b]$ must contain exactly $k_{+} - j_{+}$ eigenvalues.
The number of eigenvalues in $(a,b)$ is
$k_{+} - j_{+} -j_{0}$, where $j_{0}$ is the multiplicity of $b$.
Thus we can find the number of eigenvalues in an interval by making two
calls to the algorithm.

\begin{Ex}
From Example~\ref{ex:two} and Example~\ref{ex:three},
it follows that $G$ has exactly $4-3 = 1$ eigenvalue in $(-1, 0]$.
However, from Example~\ref{ex:two} we know that zero is an eigenvalue,
so there must be no eigenvalues in $(-1,0)$.
This is not an accident.
Recently in
\cite{Mohamadian2015},
the striking result was obtained that no cograph has eigenvalues in $(-1,0)$.
\end{Ex}

Any eigenvalue $\lambda$ can be approximated by first finding an interval for which
$$
a < \lambda \leq b  .
$$
By using a divide-and-conquer approach
in which the interval length is
successively cut in half,
one can find an interval $(a^\prime, b^\prime]$ of arbitrarily small size for which
$$
a^\prime < \lambda \leq b^\prime  .
$$
See \cite[Sec. 4]{JTT2013} for more detail.

\section{Inertia of cographs}
\label{sec:inertia}
The {\em inertia} of a graph $G$ is the triple
$(n_+, n_0, n_{-})$, where $n_+$, $n_0$,
$n_{-}$
denote respectively the number
of positive, zero, and negative eigenvalues of $G$.
We wish to compute the inertia of {\em any} cograph $G$.
If $G$ is a single vertex, its inertia is $(0,1,0)$,
so we assume that $G$ has order $n \geq 2$ with cotree $T_G$.
Note also that since $n = n_+ +   n_0  +  n_{-}$, it suffices to compute
only two of these components.
In what follows, we will obtain formulas for $n_{-}$ and
$n_0$ by using our diagonalization algorithm.
Our formula for $n_{-}$ given in Theorem~\ref{thr:inertia-neg} below appears to be new.
The formulas for $n_0$ (Theorem~\ref{thr:inertia-zer}) and the multiplicity of $-1$
(Theorem~\ref{thr:multminusone}) can be found in \cite{BSS2011}.
Formulas relating inertia to the representation
of threshold graphs can be found in the papers \cite{Bapat2013,JTT2015}.

By Theorem~\ref{thr:mainA},
we can obtain the inertia of a cograph by applying
\verb+Diagonalize+ to $T_G$ with $x = 0$,
and then counting the number of entries in the diagonal that are
positive, zero and negative.
The following technical result appears in Lemma~3 of \cite{Chang2008}.
\begin{Lem}
\label{lemmaC}
Suppose that  $ 0 \leq \alpha , \beta < 1.$ Then
\begin{enumerate}
\item[(a)] $0 < \frac{\alpha \beta -1}
{\alpha +\beta -2} < 1$
\item[(b)] $0 \leq \frac{\alpha \beta}{\alpha + \beta} < 1,$ provided $\alpha + \beta \neq 0.$
\end{enumerate}
\end{Lem}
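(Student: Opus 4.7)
The plan is to verify each inequality in turn by elementary algebraic manipulation, making careful use of the sign of the denominator in each case. The only subtlety is keeping track of how sign flips when clearing a negative denominator; there is no conceptual obstacle.

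For part (a), I first examine the signs of the numerator and the denominator separately. Since $0 \le \alpha,\beta < 1$, we have $\alpha\beta < 1$, so the numerator $\alpha\beta - 1$ is strictly negative. Similarly $\alpha + \beta < 2$, so the denominator $\alpha + \beta - 2$ is also strictly negative. Their ratio is therefore strictly positive, giving the lower bound. For the upper bound, I clear the (negative) denominator, flipping the inequality: $\frac{\alpha\beta - 1}{\alpha + \beta - 2} < 1$ is equivalent to $\alpha\beta - 1 > \alpha + \beta - 2$. Rearranging gives $\alpha\beta - \alpha - \beta + 1 > 0$, i.e.\ $(1-\alpha)(1-\beta) > 0$, which holds because each factor is strictly positive under the hypothesis $\alpha,\beta < 1$.

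For part (b), non-negativity is immediate: $\alpha\beta \ge 0$ and, since $\alpha + \beta \ne 0$ together with $\alpha,\beta \ge 0$ forces $\alpha + \beta > 0$, the quotient is well-defined and non-negative. For the strict upper bound, I again clear the (now positive) denominator, reducing $\frac{\alpha\beta}{\alpha+\beta} < 1$ to $\alpha\beta < \alpha + \beta$, i.e.\ $\alpha + \beta - \alpha\beta > 0$. Writing this as $\alpha(1-\beta) + \beta$, both summands are non-negative since $\beta < 1$, and at least one is strictly positive because $\alpha + \beta > 0$ (if $\beta > 0$ the second term suffices; if $\beta = 0$ then $\alpha > 0$ and the first term equals $\alpha > 0$). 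This establishes the strict inequality and completes the proof.

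The hardest part is really just the bookkeeping of strict versus non-strict inequalities and the sign flip in (a); the key algebraic identity $1 - \alpha - \beta + \alpha\beta = (1-\alpha)(1-\beta)$ makes the main step transparent.
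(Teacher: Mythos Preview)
Your proof is correct and complete; the algebra is clean and the handling of strict versus non-strict inequalities is careful. Note that the paper does not actually prove this lemma but merely cites it from \cite{Chang2008}, so you have supplied an argument where the paper provides none.
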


\begin{Lem}
\label{lemmaB}
If $\{v_k, v_l\}$ is a sibling pair processed by \verb+Diagonalize+,
with parent $w=\join$ for which $0 \leq d_k, d_l < 1$,
then $d_k$ becomes permanently negative, and $d_l$ is assigned a value in $(0,1)$.
\end{Lem}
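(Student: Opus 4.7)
The plan is to observe that since the parent $w=\join$, we are squarely in Case 1 of Section~\ref{sec:diagstep}, so only subcases 1a, 1b, 1c are candidates. I would dispatch the latter two first. Both subcases 1b and 1c require the precondition $\alpha+\beta=2$; but by hypothesis $\alpha=d_k<1$ and $\beta=d_l<1$, so $\alpha+\beta<2$. Hence subcases 1b and 1c are vacuous, and the algorithm must execute subcase 1a.

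Next I would simply read off the two assignments performed in subcase~1a, namely
\[
d_k \leftarrow \alpha+\beta-2, \qquad d_l \leftarrow \frac{\alpha\beta-1}{\alpha+\beta-2},
\]
and verify the two claims separately. For $d_k$: since $\alpha,\beta<1$, the new value $\alpha+\beta-2$ is strictly negative, and subcase~1a declares $d_k$ permanent (the vertex $v_k$ is removed from $T_G$), so $d_k$ is permanently negative. For $d_l$: apply Lemma~\ref{lemmaC}(a) with the given hypothesis $0\le\alpha,\beta<1$ to conclude that the updated $d_l$ lies in $(0,1)$.

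I do not foresee any real obstacle; the lemma is essentially a case-analysis bookkeeping check. The only subtlety worth flagging is that Lemma~\ref{lemmaC}(a) is stated for $0\le\alpha,\beta<1$, matching exactly our standing hypothesis, so it applies directly without any boundary issues. One minor care point is to verify the denominator $\alpha+\beta-2$ is nonzero so that the formula for $d_l$ is well-defined, but this was already established in the first paragraph when we ruled out subcases 1b and 1c.
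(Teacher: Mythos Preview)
Your proposal is correct and follows essentially the same approach as the paper's proof: rule out subcases~1b and~1c because $\alpha+\beta<2$, then read off the subcase~1a assignments, observe $\alpha+\beta-2<0$, and invoke Lemma~\ref{lemmaC}(a) for $d_l\in(0,1)$. The paper's version is terser (it simply asserts ``by our assumption, subcase~1a is executed''), while you spell out explicitly why the other subcases are excluded, but the argument is the same.
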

\begin{proof}
By our assumption, {\bf subcase~1a} is executed.  Hence
\begin{eqnarray*}
d_k & \leftarrow & \alpha + \beta  -2 \\
d_l & \leftarrow & \frac{\alpha \beta -1 }{\alpha + \beta -2}
\end{eqnarray*}
where $\alpha, \beta$ are the old values of $d_k, d_l$.
Clearly $d_k < 0$. By Lemma \ref{lemmaC}-(a), $d_l \in (0,1)$.
\end{proof}

\begin{Lem}
\label{lemmaA}
If $\{v_k, v_l\}$ is a sibling pair processed by \verb+Diagonalize+,
with parent $w=\union$ for which $0 \leq d_k, d_l < 1$,
then
$d_k$ becomes permanently nonnegative
and $d_l$ is assigned a value in $[0,1)$.
\end{Lem}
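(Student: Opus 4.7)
The plan is to mirror the strategy used in the proof of Lemma~\ref{lemmaB}: since the parent $w$ is $\union$, the algorithm executes Case~2, and I will examine each of its three subcases under the hypothesis $0 \leq d_k, d_l < 1$.

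First I will rule out \textbf{subcase 2c}. That subcase requires $d_k + d_l = 0$ with $d_l \neq 0$. Under our hypothesis both $d_k$ and $d_l$ are nonnegative, so $d_k + d_l = 0$ forces $d_k = d_l = 0$, contradicting $d_l \neq 0$. Hence \textbf{2c} cannot occur here.

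Next I will handle the remaining two subcases directly. In \textbf{subcase 2b} we have $d_k + d_l = 0$ and $d_l = 0$, so the algorithm assigns $d_k \leftarrow 0$ and $d_l \leftarrow 0$; then $d_k$ is permanently $0$ (hence nonnegative) and $d_l = 0 \in [0,1)$, as required. In \textbf{subcase 2a}, write $\alpha, \beta$ for the old values of $d_k, d_l$, so $\alpha,\beta \in [0,1)$ and $\alpha+\beta \neq 0$. The assignment $d_k \leftarrow \alpha+\beta$ is nonnegative, and since $v_k$ is removed from the cotree this value is permanent. For $d_l \leftarrow \frac{\alpha\beta}{\alpha+\beta}$, I will apply Lemma~\ref{lemmaC}(b), which gives exactly $d_l \in [0,1)$.

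I do not anticipate any real obstacle; the only subtle point is observing that the nonnegativity hypothesis rules out \textbf{subcase 2c}, which parallels the role played by the corresponding observation in Lemma~\ref{lemmaB}. The rest is a direct application of Lemma~\ref{lemmaC}(b).
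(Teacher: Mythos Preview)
Your proof is correct and follows essentially the same approach as the paper's: both arguments split according to whether $d_k+d_l\neq 0$ (\textbf{subcase~2a}) or $d_k=d_l=0$ (\textbf{subcase~2b}), invoking Lemma~\ref{lemmaC}(b) in the former case. Your explicit elimination of \textbf{subcase~2c} is a slightly more careful presentation of what the paper leaves implicit.
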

\begin{proof}
If $d_k  > 0$ or $d_l > 0$,
then
{\bf subcase~2a}
is executed which means
\begin{eqnarray*}
d_k \leftarrow \alpha + \beta \\
d_l \leftarrow \frac{\alpha \beta}{\alpha + \beta}
\end{eqnarray*}
Clearly $d_k > 0$ and by  Lemma \ref{lemmaC}-(b), $d_l \in [0,1)$.
If $d_k = d_l = 0$, then {\bf subcase~2b} is executed
meaning that both
$d_k$ and $d_l$ are assigned $0$,
$d_k$ permanently so.
\end{proof}

\begin{Lem}
\label{lem:range}
During the execution of
\verb+Diagonalize+$(T_G , 0)$,
all diagonal values of vertices remaining on the cotree are in $[0,1)$.
\end{Lem}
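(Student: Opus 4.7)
The plan is to prove the invariant by induction on the number of iterations of \verb+Diagonalize+$(T_G,0)$, using Lemmas \ref{lemmaB} and \ref{lemmaA} as the step cases.

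For the base case, before any iteration is executed the algorithm has initialized $d_i := 0$ for every vertex $v_i$, so every diagonal value on the cotree lies in $[0,1)$. For the inductive step, suppose after some iteration every diagonal value of a vertex still on the cotree lies in $[0,1)$. The next iteration selects a sibling pair $\{v_k,v_l\}$ of maximum depth with parent $w$. By hypothesis, $0 \le \alpha,\beta < 1$, where $\alpha = d_k$ and $\beta = d_l$.

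The key observation is that under the standing assumption $\alpha,\beta \in [0,1)$, only the ``nice'' subcases can occur. If $w=\join$, then $\alpha+\beta < 2$, so the test $\alpha+\beta \ne 2$ succeeds and \textbf{subcase~1a} is executed; Lemma~\ref{lemmaB} then tells us $v_k$ leaves the cotree with a negative (permanent) value, while the new $d_l$ lies in $(0,1) \subset [0,1)$. If $w=\union$, then $\alpha+\beta \ge 0$ with equality only when $\alpha = \beta = 0$, so only \textbf{subcase~2a} or \textbf{subcase~2b} can occur; Lemma~\ref{lemmaA} then guarantees that $v_k$ leaves the cotree nonnegative and that the new $d_l$ lies in $[0,1)$. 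The values of all other vertices still on the cotree are untouched by the iteration, so they remain in $[0,1)$ by the inductive hypothesis.

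There is no real obstacle here, only bookkeeping: the single thing to check is that subcases 1b, 1c, and 2c are unreachable under the invariant, which is immediate from $\alpha,\beta \in [0,1)$ (ruling out $\alpha+\beta = 2$ for Case~1 and $\alpha+\beta = 0$ with $\beta \ne 0$ for Case~2). Combining both step cases closes the induction and establishes the claim.
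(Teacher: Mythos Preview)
Your proof is correct and follows essentially the same approach as the paper: induction on the number of iterations, with the base case given by the initialization $d_i=0$ and the inductive step handled by Lemmas~\ref{lemmaB} and~\ref{lemmaA} according to whether the parent is $\join$ or $\union$. Your explicit observation that subcases~1b, 1c, and 2c are unreachable under the invariant, and that all other diagonal values are untouched, makes the bookkeeping slightly more explicit than the paper's version but does not change the argument.
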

\begin{proof}
Initially all values on $T_G$ are zero.
Suppose after $m$ iterations of
\verb+Diagonalize+ all diagonal values
of the cotree are in $[0,1)$,
and consider iteration $m+1$ with sibling pair
$\{v_k, v_l\}$ and parent $w$.
By assumption, $0 \leq d_k, d_l < 1$.
If $w = \join$
then Lemma~\ref{lemmaB}
guarantees the vertex $d_l$
remaining on the tree is assigned a value in $(0,1)$.
If $w = \union$, Lemma~\ref{lemmaA} guarantees $d_l \in [0,1)$.
This means  after $m+1$ iterations
the cotree $T_G - v_k$
satisfies the desired property, completing the proof.
\end{proof}

\begin{Rem}
\label{rem:bottomup}
Observe that if $w$ is an interior node in $T_G$ having $t$ children,
as the algorithm progresses bottom up
through the rules of Lemma~\ref{lem:cotreeremove},
each interior child of $w$
eventually is replaced by a leaf.
Thus when $w$ is ready to be processed
it will have $t$ leaves as children.
To simplify our analysis,
without loss of generality we can assume that
all $t-1$ sibling pairs are processed consecutively.
\end{Rem}

The following theorem shows that the quantity $n_{-}(G)$ can be
computed in linear time from the cotree.

\begin{Thr}
\label{thr:inertia-neg}
Let $G$ be a cograph with cotree $T_G$ having $\join$-nodes $\{w_1, \ldots, w_j \}$,
and assume each $w_i$ has $t_i$ children in $T_G$.  Then
$$
n_{-}(G)  = \sum_{i=1}^{j} (t_i -1) .
$$
\end{Thr}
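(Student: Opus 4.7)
The plan is to compute $n_{-}(G)$ by running \verb+Diagonalize+$(T_G,0)$ and counting negative entries in the resulting diagonal $D$. By Theorem~\ref{thr:mainA} (applied with $a=0$), this count equals the number of negative eigenvalues of $A(G)$, so it suffices to show that exactly $\sum_{i=1}^{j}(t_i-1)$ permanent values recorded by the algorithm are negative, and none of the remaining entries are.

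First, I will organize the analysis using Remark~\ref{rem:bottomup}: I may assume that each interior node $w$ of $T_G$, once all of its (originally interior) children have been reduced to leaves, has its $t$ children processed as a consecutive block of $t-1$ sibling-pair operations. During this block, after each operation, one child becomes a permanent diagonal entry and the other stays on the cotree, so $w$'s block contributes exactly $t-1$ permanent entries and one surviving leaf which is then promoted up the tree.

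Next I will classify these $t-1$ permanent entries according to the type of $w$, invoking Lemma~\ref{lem:range} as a uniform invariant: at every moment, every vertex still on $T_G$ carries a diagonal value in $[0,1)$. If $w=\union$, Lemma~\ref{lemmaA} applies to each of the $t_i-1$ pairs processed at $w$ and makes the permanent $d_k$ nonnegative, so $w$ contributes $0$ negative entries. If $w=\join$, Lemma~\ref{lemmaB} applies to each of the $t_i-1$ pairs (the hypothesis $0\le d_k,d_l<1$ is supplied by Lemma~\ref{lem:range}) and produces a permanent $d_k<0$ while keeping the surviving $d_l$ in $(0,1)\subset[0,1)$, preserving the invariant and contributing exactly $t_i-1$ negative entries. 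Summing over all join nodes gives $\sum_{i=1}^{j}(t_i-1)$ negative entries, and the final leaf that remains when the loop terminates carries a value in $[0,1)$ by Lemma~\ref{lem:range}, hence contributes none.

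The main obstacle is verifying that the bottom-up normalization of Remark~\ref{rem:bottomup} genuinely lets us package the $t_i-1$ operations at $w$ together with the correct invariant, because the surviving $d_l$ from a child block is what enters the parent block as a leaf value; I will therefore explicitly note that, by Lemma~\ref{lem:range} together with Lemmas~\ref{lemmaA} and \ref{lemmaB}, the value carried by any leaf that was produced by collapsing an interior node still lies in $[0,1)$, so the hypotheses of those lemmas continue to hold when the parent's block is processed. Once this invariant is secured, the two case analyses immediately yield the stated formula.
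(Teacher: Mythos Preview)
Your proof is correct and follows essentially the same approach as the paper: run \verb+Diagonalize+$(T_G,0)$, invoke Lemma~\ref{lem:range} to keep all cotree values in $[0,1)$, and then apply Lemmas~\ref{lemmaB} and~\ref{lemmaA} to show that each $\join$-node contributes exactly $t_i-1$ negative permanent values while $\union$-nodes contribute none. Your version is slightly more explicit than the paper in accounting for the final surviving leaf, but the structure and key lemmas are identical.
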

\begin{proof}
In executing \verb+Diagonalize+$(T_G , 0)$,
consider an interior node $w_i$
in $T_G$ of type $\join$ with $t_i$ children.
By Remark~\ref{rem:bottomup}, when it becomes eligible to be processed,
it will have $t_i$ leaves,
and the algorithm will process $t_i - 1$ sibling pairs.
By Lemma~\ref{lem:range} all diagonal values on the cotree remain in $[0,1)$.
By Lemma~\ref{lemmaB}
each of the $t_i - 1$ sibling pairs will produce
a permanent negative value before $w_i$ is removed.
This shows
$$
n_{-}(G)  \geq \sum_{i=1}^j (t_i -1) .
$$
However Lemma~\ref{lemmaA} shows that processing a sibling pair
with parent $\union$ can only produce nonnegative permanent values.
Hence the inequality is tight, completing the proof.
\end{proof}

We now consider $n_{0}(G)$.
A formula by B{\i}y{\i}ko{\u{g}}lu, Simi{\'c} and Stani{\'c} is known
\cite[Cor. 3.2]{BSS2011}
but we give an alternate proof using our algorithm.

\begin{Rem}
\label{rem:bottomupunion}
Consider an interior node $w$ of type $\union$ with $k = s + t$ children,
where $s$ children are interior (of type $\join$) and $t$ children are terminal.
In the execution of \verb+Diagonalize+$(T_G , 0)$,
from Lemmas~\ref{lemmaB}~and~\ref{lem:range}
each $\join$ node will become positive.
Thus when $w$ is processed, it will have
$s$ leaves with positive values and $t$ leaves with zero.
\end{Rem}

\begin{Thr}
\label{thr:inertia-zer}
Let $G$ be a cograph with cotree $T_G$ having $\union$-nodes $\{w_1, \ldots, w_m \}$,
where $w_i$ has $t_i$ terminal children.
If $G$ has $j \geq 0$ isolated vertices, then
$$
 n_{0}(G)  =  j + \sum_{i=1}^m (t_i -1) .
$$
\end{Thr}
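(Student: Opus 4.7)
The plan is to parallel Theorem~\ref{thr:inertia-neg}: run \verb+Diagonalize+$(T_G, 0)$ and invoke Theorem~\ref{thr:mainA}(iii), reducing the problem to counting zeros in the returned diagonal. By Lemma~\ref{lemmaB} together with the invariant of Lemma~\ref{lem:range}, every sibling pair processed at a $\join$-node produces a strictly negative permanent value, so a zero in the diagonal can come only from a subcase~2b event at some $\union$-node or from the single value remaining on the cotree when the loop halts.

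The heart of the argument is a local analysis at a single $\union$-node $w_i$. By Remark~\ref{rem:bottomupunion}, when $w_i$ becomes eligible for processing its $t_i$ terminal children still carry value $0$, while each remaining child, inherited from a processed $\join$-subtree, carries a strictly positive value in $(0,1)$. Inspecting subcases~2a and~2b: pairing two zero values (subcase~2b) emits a permanent zero and returns a zero to the pool; pairing a zero with a positive (subcase~2a) emits a permanent positive and returns the zero to the pool; pairing two positives emits a permanent positive and a positive. Zeros therefore leave the pool only via subcase~2b, one at a time, and each such event contributes exactly one permanent zero. Since the pool at $w_i$ shrinks to a single leaf, a short conservation count forces exactly $t_i - 1$ subcase~2b events when $t_i \geq 1$, producing $t_i - 1$ permanent zeros and leaving a zero as the surviving leaf that $w_i$ passes up to its parent.

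Finally I would account for the lone leaf present at termination, together with the isolated vertices. If the root is $\join$, the surviving leaf is a positive value (from a $\join$-pairing) and $j = 0$, so the total count is $\sum_i (t_i - 1) = j + \sum_i (t_i - 1)$. If the root is a $\union$-node, then the isolated vertices of $G$ are exactly its terminal children (using the alternating structure of the cotree and the characterization of isolation via Lemma~\ref{lem:lca}); combining the root's $t_{\text{root}} - 1$ permanent zeros with the zero left as its surviving leaf yields an extra contribution that matches the $j$ term in the formula.

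The main obstacle is the bookkeeping in this last step: one must reconcile the contribution of the root $\union$-node (including its single surviving leaf) with the isolated-vertex term $j$, and verify that edge cases such as non-root $\union$-nodes with $t_i = 0$ (where no permanent zero is produced and the leaf passed up is positive) remain consistent with the stated closed form.
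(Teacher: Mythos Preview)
Your plan is essentially the paper's proof: run \verb+Diagonalize+$(T_G,0)$, use Remark~\ref{rem:bottomupunion} to see that each $\union$-node $w_i$ is processed with $s_i$ positive leaves and $t_i$ zero leaves, count the permanent zeros produced there (the paper asserts directly that subcase~2a runs $s_i$ times and subcase~2b runs $t_i-1$ times, whereas you give an order-independent conservation argument to reach the same count), and then handle the root and the $j$ isolated vertices separately at the end. Your worry about the $t_i=0$ edge case and the precise root bookkeeping is well placed; the paper's proof glosses over both points in the same way.
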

\begin{proof}
Consider the execution of \verb+Diagonalize+$(T_G , 0)$,
and first consider the case when $j=0$.
Let $w_i$ be an interior node
of type $\union$ having $k_i$ children where $s_i$ children are interior nodes
and $t_i$ are terminal.
By Remark~\ref{rem:bottomupunion},
when $w_i$ is ready to be processed it
will have $s_i$ positive children and $t_i$ zeros.
From Lemma~\ref{lemmaA} we see that {\bf subcase~2a} will be executed
$s_i$ times and {\bf subcase~2b} $t_i - 1$ times.
Each execution of {\bf subcase~2b} produces a permanent zero on the diagonal,
and so $w_i$ contributes to $t_i - 1$ zeros.
This shows $n_{0}(G)  \geq  \sum_{i=1}^m (t_i -1)$.
To obtain equality, we note that
no zero can be created when processing a sibling pair whose parent is $\join$.
If $G$ has $j > 0$ isolates,
then the root of $T_G$ has type $\union$ with $j$ children as leaves.
We claim $j$ additional zeros are created.
Indeed, $j-1$ are created through
{\bf subcase~2b}.
The last iteration,
either {\bf subcase~2a} or {\bf subcase~2b},
creates an additional zero.
\end{proof}

It is possible to frame Theorem~\ref{thr:inertia-zer} in terms of duplicate vertices.
Let $\{V_i\}$ be a partition over
the set of all vertices that are in duplicate pairs, such that
each $V_i$ contains mutually pairwise duplicate vertices,
and for $i \not= j$, $v \in V_i$ and $w \in V_j$ imply $N(v) \not= N(w)$.
Then $|V_i| = t_i$.

We mention two other known theorems that can be obtained through our algorithm.
While we omit the details, their proofs involve algorithm analysis when $x = 1$.
One such result \cite{Mohamadian2015}
is that no cograph has an eigenvalue in $(-1,0)$.
Another \cite{BSS2011} involves the multiplicity of $-1$:

\begin{Thr}
\label{thr:multminusone}
Let $G$ be a cograph with cotree $T_G$ having $\join$-nodes $\{w_1, \ldots, w_m \}$,
where $w_i$ has $t_i$ terminal children.
Then the multiplicity of $-1$ is $\sum_{i=1}^m (t_i -1)$.
\end{Thr}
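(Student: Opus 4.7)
My plan is to follow the pattern of the proofs of Theorems~\ref{thr:inertia-neg} and~\ref{thr:inertia-zer}. By Theorem~\ref{thr:mainA} applied with $a=-1$ (so $x=-a=1$), the multiplicity of $-1$ in the spectrum of $G$ equals the number of zero entries in the diagonal $D$ produced by \verb+Diagonalize+$(T_G,1)$. Initially all diagonal values equal $x=1$, so I will count how many permanent zeros are recorded over the course of the algorithm.

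The first step is to establish an analogue of Lemma~\ref{lem:range}: throughout the execution of \verb+Diagonalize+$(T_G,1)$, every diagonal value of a vertex still on the cotree lies in $(0,1]$. This is proved by induction on the number of iterations, with a case check on the two flavours of sibling pair. At a $\join$-sibling pair with $\alpha,\beta\in(0,1]$, subcase 1c cannot occur because $\alpha+\beta=2$ with $\beta\ne 1$ would force $\alpha>1$; subcase 1b fires precisely when $\alpha=\beta=1$, assigning the permanent value $d_k\leftarrow 0$ and leaving $d_l=1$; subcase 1a otherwise assigns $d_k\leftarrow\alpha+\beta-2<0$ permanently (strictly negative since $\alpha+\beta\ne 2$) and leaves $d_l\in(0,1]$ (equal to 1 when exactly one of $\alpha,\beta$ equals 1, and in $(0,1)$ by Lemma~\ref{lemmaC}(a) otherwise). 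At a $\union$-sibling pair with $\alpha,\beta\in(0,1]$ the sum $\alpha+\beta$ is strictly positive, so only subcase 2a fires; it assigns $d_k\leftarrow\alpha+\beta>0$ permanently and $d_l\leftarrow\alpha\beta/(\alpha+\beta)$, which lies strictly below 1 because $1/\alpha+1/\beta\ge 2>1$. Consequently, permanent zeros arise only from subcase 1b, and each such occurrence pairs two leaves of value 1 under a $\join$-node.

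Next I invoke the bottom-up ordering of Remark~\ref{rem:bottomup}. When a $\join$-node $w_i$ is ready to be processed, all of its children are leaves: its $t_i$ original terminal children still carry the initial value 1 (their entries are untouched until a sibling of theirs is paired), while each of its interior $\union$-children has been collapsed to a single leaf of value strictly less than 1 by the preceding step. Let $f$ denote the number of leaves of value 1 currently residing under $w_i$. Inspecting the pairing rules shows that $f$ starts at $t_i$, that subcase 1b (which requires $f\ge 2$) decrements $f$ by exactly 1, and that every invocation of subcase 1a leaves $f$ unchanged. Since $f$ can only decrease, and subcase 1b becomes unavailable once $f\le 1$, the pairing at $w_i$ must terminate with $f=1$, so subcase 1b is invoked exactly $t_i-1$ times. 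Summing the contributions $t_i-1$ over the $\join$-nodes $w_1,\ldots,w_m$ yields the stated formula.

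The step I expect to require most care is the $f$-invariant bookkeeping just described: one must verify that every invocation of subcase 1a preserves $f$. The degenerate situation $\alpha=\beta=1$ actually routes to 1b and not 1a; when exactly one of $\alpha,\beta$ equals 1 the formula for the new $d_l$ yields 1, so the surviving leaf still carries the value 1 and $f$ is unchanged; and when both $\alpha,\beta<1$ the set of value-1 leaves is untouched. These bookkeeping details ensure that the count of permanent zeros produced at $w_i$ is forced to be $t_i-1$ regardless of the order in which the algorithm picks sibling pairs.
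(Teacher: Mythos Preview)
The paper does not actually prove this theorem; it only remarks that the argument ``involves algorithm analysis when $x=1$'' and refers to \cite{BSS2011}. Your proof carries out exactly that analysis, mirroring the proofs of Theorems~\ref{thr:inertia-neg} and~\ref{thr:inertia-zer}, and it is correct: the range invariant $d_i\in(0,1]$ is the right analogue of Lemma~\ref{lem:range}, and the $f$-counter argument cleanly isolates subcase~1b as the sole source of permanent zeros and pins down its number of invocations under each $\join$-node. One small caveat worth stating explicitly: your conclusion that the process terminates with $f=1$ presupposes $t_i\ge1$; a $\join$-node with no terminal children has $f=0$ throughout and contributes no zeros, so the sum in the statement (as in Theorem~\ref{thr:inertia-zer}) should be read as ranging only over those $w_i$ with $t_i\ge1$.
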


We apply the theorems in this section to the graph of Figure \ref{fig:cotree}.
By Theorem~\ref{thr:inertia-zer}, we have $n_{0}(G) =  2-1 = 1$.
Applying Theorem~\ref{thr:inertia-neg},
$n_{-}(G)  =  (3-1) + (3-1) + (2-1) = 5$.
Therefore $n_{+}(G) =   9 - 5 - 1 = 4$.
By Theorem~\ref{thr:multminusone}
the multiplicity of $-1$ in $G$ is two.
These numbers agree with those given in Section~\ref{sec:locating}.

\section{Equienergetic cographs}
\label{sec:energy}
Recall that the {\em energy} $E(G)$ of a graph $G$
is defined to be $\sum_{i=1}^n | \lambda_i |$,
where $\lambda_1, \ldots, \lambda_n$  are its eigenvalues.
If $E(G) = E(H)$, we say $G$ and $H$ are {\em equienergetic}.
It is known that the complete graph $K_n$ has energy $2(n-1)$.
We finish this paper by exhibiting an infinite class
$\Goneclass = \{ G_{1}, G_{2}, \ldots, \Gofr, \ldots  \}$
where $\Gofr$ is a cograph of order $n=3r+4$ and $E( \Gofr ) = E(K_{3r+4})$.
In \cite{JTT2015} the authors gave similar examples of threshold graphs,
however our present example involves non-threshold graphs.
This is of interest because equienergetic examples
seem to often involve non-integer energy.

The following two technical lemmas describe the diagonalization algorithm
when multiple leaves of the same parent have the {\em same} diagonal value $d_i = y$.
In particular, when an interior vertex $w$ of the cograph has $m$ terminal children,
$v_1, \ldots, v_m$, the algorithm will generally make $m-1$ iterations.
If all $d_i$ are equal, then under certain conditions,
can say exactly what assignments are made at each iteration.
We can assume sibling pairs are chosen to be the leftmost pair.
In Lemma~\ref{rem:join},
we prove the case when $w = \join$,
and the case $w = \union$
in Lemma~\ref{rem:union}
is handled in a similar way.

\begin{Lem}
\label{rem:join}
If $v_1, \ldots, v_m$ have parent $w=\join$,
each with diagonal value $y > 1$,
then the algorithm performs $m-1$ iterations of {\bf subcase 1a} assigning, during iteration $j$:
\begin{eqnarray}
d_{k} & \leftarrow & \frac{j+1}{j}(y -1)  \label{eq:remjoina}  \\
d_{l} & \leftarrow & \frac{y+j}{j+1}     \label{eq:remjoinb}
\end{eqnarray}
\end{Lem}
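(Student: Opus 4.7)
The plan is to prove the lemma by induction on the iteration number $j$, maintaining the invariant that after iteration $j$ the remaining vertex among the children of $w$ carries the diagonal value $\frac{y+j}{j+1}$, while in iteration $j$ itself the discarded vertex $v_k$ receives the permanent value $\frac{j+1}{j}(y-1)$.

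First I would verify the base case $j=1$. Since all children of $w$ still carry the value $y$, the leftmost sibling pair has $\alpha = \beta = y$. Because $y > 1$, we have $\alpha + \beta = 2y \neq 2$, so subcase 1a is triggered, and plugging into its assignments gives $d_k \leftarrow 2(y-1)$ and $d_l \leftarrow \frac{y^2-1}{2(y-1)} = \frac{y+1}{2}$, which match (\ref{eq:remjoina}) and (\ref{eq:remjoinb}) at $j=1$.

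For the inductive step, suppose the invariant holds after iteration $j$. Then the next sibling pair consists of the retained vertex (value $\frac{y+j}{j+1}$) and a fresh leaf still carrying value $y$. I would first check that subcase 1a still applies by showing $\alpha + \beta \neq 2$: a direct computation gives $\alpha + \beta - 2 = \frac{(j+2)(y-1)}{j+1}$, which is nonzero because $y > 1$. This already proves formula (\ref{eq:remjoina}) for iteration $j+1$. Then the new $d_l$ equals
\[
\frac{\alpha\beta - 1}{\alpha+\beta-2} \;=\; \frac{\frac{y(y+j)}{j+1} - 1}{\frac{(j+2)(y-1)}{j+1}} \;=\; \frac{y(y+j) - (j+1)}{(j+2)(y-1)}.
\]
The key algebraic step is to factor the numerator as $y^2 - 1 + j(y-1) = (y-1)(y+j+1)$, after which the $(y-1)$ cancels and we obtain $\frac{y+j+1}{j+2}$, matching (\ref{eq:remjoinb}) with $j$ replaced by $j+1$ and restoring the invariant.

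Finally, since the starting configuration has $m$ identical leaves and each iteration removes exactly one of them while leaving the remainder inside the same $\join$-node, the process runs for exactly $m-1$ iterations before $w$ has only a single child. The only real obstacle is the simplification $y(y+j) - (j+1) = (y-1)(y+j+1)$ in the inductive step; everything else is bookkeeping about which vertex is $v_k$, which is $v_l$, and the observation that $y > 1$ rules out the degenerate subcases 1b and 1c throughout the process.
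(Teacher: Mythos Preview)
Your proof is correct and follows essentially the same approach as the paper: induction on the iteration index $j$, with the same base case and the same inductive computation showing subcase~1a applies and yields the stated values. Your treatment is slightly more explicit in factoring the numerator $y(y+j)-(j+1)=(y-1)(y+j+1)$ and in noting why exactly $m-1$ iterations occur, but the argument is the same.
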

\begin{proof}
It is easy to check that when $j=1$,
since $\alpha = \beta = y > 1$,
{\bf subcase 1a} will be chosen and perform assignments
$d_{k} \leftarrow  2 (y -1)$
and
$d_{l} \leftarrow \frac{y+1}{2}$.
Now suppose iteration $j$ makes assignments
(\ref{eq:remjoina}) and (\ref{eq:remjoinb}).
Then during iteration $j+1$, we will have
$\alpha = \frac{y+j}{j+1} > 1$ and $\beta = y > 1$.
Therefore $\alpha + \beta \neq 2$ and {\bf subcase 1a} will execute again.
From the rules of the algorithm we get
\begin{eqnarray*}
d_{k} & \leftarrow & \alpha + \beta - 2 =  \frac{y+j}{j+1} + y -2 = \frac{j+2}{j+1}(y-1) \\
d_{l} & \leftarrow & \frac{\alpha \beta - 1}{\alpha + \beta -2} = (\frac{y^2 + jy}{j+1}  - 1)\frac{j+1}{(j+2)(y-1)} = \frac{j + y + 1}{j+2}
\end{eqnarray*}
which completes the induction.
\end{proof}

\begin{Lem}
\label{rem:union}
If $v_1, \ldots, v_m$ have parent $w=\union$,
each with diagonal value $y > 0$,
then the algorithm performs $m-1$ iterations of {\bf subcase 2a},
assigning during iteration $j$:
\begin{eqnarray}
d_{k} & \leftarrow & \frac{(j+1)y}{j} \label{eq:remuniona}  \\
d_{l} &\leftarrow &   \frac{y}{j+1} \label{eq:remunionb}
\end{eqnarray}
\end{Lem}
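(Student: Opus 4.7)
The plan is to mirror the induction used in Lemma \ref{rem:join}, with $j$ as the induction variable. The base case $j=1$ picks the leftmost pair, so $\alpha = \beta = y$. Since $y>0$, we have $\alpha+\beta = 2y \neq 0$, which forces subcase 2a. The algorithm then assigns $d_k \leftarrow \alpha+\beta = 2y$ and $d_l \leftarrow \alpha\beta/(\alpha+\beta) = y/2$, matching (\ref{eq:remuniona}) and (\ref{eq:remunionb}) at $j=1$.

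For the inductive step, assume iteration $j$ produced the assignments stated in (\ref{eq:remuniona}) and (\ref{eq:remunionb}). By the rules of the algorithm, $v_k$ is removed from the cotree with permanent diagonal value $(j+1)y/j$, while the surviving vertex $v_l$ remains under $w$ carrying the value $y/(j+1)$. In iteration $j+1$, this surviving vertex is paired with the next untouched leaf below $w$, whose value is still $y$. Hence we enter iteration $j+1$ with $\alpha = y/(j+1)$ and $\beta = y$ (the labeling is symmetric for subcase 2a). Both are strictly positive, so $\alpha + \beta = y(j+2)/(j+1) \neq 0$, and subcase 2a fires once again.

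A direct computation then gives
\begin{eqnarray*}
d_k &\leftarrow& \alpha+\beta \;=\; \frac{y}{j+1}+y \;=\; \frac{(j+2)\,y}{j+1},\\
d_l &\leftarrow& \frac{\alpha\beta}{\alpha+\beta} \;=\; \frac{y^2/(j+1)}{y(j+2)/(j+1)} \;=\; \frac{y}{j+2},
\end{eqnarray*}
which is precisely (\ref{eq:remuniona}) and (\ref{eq:remunionb}) evaluated at $j+1$, closing the induction.

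The only point requiring attention is to confirm that subcase 2a is taken at every step (so that neither 2b nor 2c is accidentally triggered), but this is immediate from positivity: $y>0$ guarantees that every value on the relevant sibling pair is strictly positive throughout the $m-1$ iterations, so $\alpha+\beta>0$ always holds. Thus there is no real obstacle; the result is a clean induction whose only substantive content is the algebra already displayed above.
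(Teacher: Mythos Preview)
Your argument is correct and is exactly the approach the paper intends: the paper's proof reads in full ``Similar to Lemma~\ref{rem:join},'' and your write-up is precisely that induction carried out explicitly for the $\union$ case. Nothing further is needed.
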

\begin{proof}
Similar to Lemma~\ref{rem:join}.
\end{proof}

For each integer $r\geq 1$, we now define the cograph $\Gofr$ to be the join of $K_{r+2}$
with the disjoint union of $r+1$ copies of $K_2$, that is
$$
\Gofr = ( \underbrace{K_2 \union K_2 \union \ldots \union K_2)}_{r+1} \join K_{r+2}.
$$
$\Gofr$ has order $n = 3r+4$ and its cotree is shown in Figure \ref{Figu2}.
Let $m(\lambda;G)$ denote the multiplicity of an eigenvalue in $G$.

\begin{figure}[b]
\begin{tikzpicture}
  [scale=1,auto=left,every node/.style={circle,scale=0.9}]
  \node[draw,circle,fill=blue!10, inner sep=0] (a) at (2,2) {$\join$};
  \node[draw, circle, fill=black,label=left:$\ldots$, label=right:$r+2$] (b) at (2,1) {};
  \node[draw, circle, fill=blue!10, inner sep=0] (f) at (3.5,1) {$\union$};
  \node[draw,circle,fill=black, label=left:$1$] (c) at (0.75,1) {};
  \node[draw,circle,fill=black] (d) at (3.75,-1) {};
  \node[draw,circle,fill=black] (e) at (3,-1) {};
  \node[draw,circle,fill=black] (i) at (4.25,-1) {};
  \node[draw,circle,fill=black] (j) at (5,-1) {};

  \node[draw,circle,fill=blue!10, inner sep=0, label=left:$1$] (g) at (3.25,0) {$\join$};
  \node[draw,circle,fill=blue!10, inner sep=0,label=left:$\ldots$, label=right:$r+1$] (h) at (4.5,0) {$\join$};
  \path (a) edge node[left]{} (b)
        (a) edge node[below]{} (c)
        (a) edge node[below]{} (f)
      (h) edge node[below]{} (i)
      (h) edge node[below]{}  (j)
       (g) edge node[below]{} (d)
      (g) edge node[below]{} (e)
        (f) edge node[left]{} (g)
        (f) edge node[left]{} (h);
\end{tikzpicture}
       \caption{The cotree $T_{\Gofr}$}
       \label{Figu2}
\end{figure}
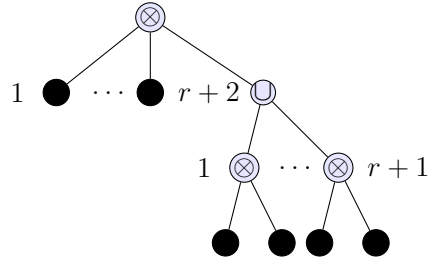

\begin{Lem}
\label{lema4}
$\Gofr$ has inertia
$(r+1, 0, 2r+3)$
and $m(-1; \Gofr) = 2(r+1)$.
\end{Lem}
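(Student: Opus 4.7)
The plan is to read the cotree $T_{\Gofr}$ off of the definition of $\Gofr$ (see Figure~\ref{Figu2}) and then apply Theorems~\ref{thr:inertia-neg}, \ref{thr:inertia-zer}, and \ref{thr:multminusone} verbatim. Since $\Gofr$ is a join at the outermost level, its cotree has root of type $\join$. The factor $K_{r+2}$ contributes $r+2$ terminal children attached directly to the root (its own internal $\join$ is absorbed into the outer one by the alternating convention), while the factor $K_2 \union \cdots \union K_2$ contributes a single additional interior child of type $\union$, whose $r+1$ children are all $\join$-nodes carrying two terminal leaves each. Thus the $\join$-nodes of $T_{\Gofr}$ are the root (with $r+3$ children total, $r+2$ of them terminal) together with the $r+1$ two-leaf joins, the unique $\union$-node has no terminal children, and $\Gofr$ has no isolated vertices.

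The inertia is then a direct substitution. Theorem~\ref{thr:inertia-neg} uses the total number of children at each $\join$-node, so
$$
n_{-}(\Gofr) \;=\; (r+3-1) + (r+1)(2-1) \;=\; 2r+3.
$$
Theorem~\ref{thr:inertia-zer} gives $n_{0}(\Gofr) = 0$: no vertex is isolated, and the unique $\union$-node contributes nothing to the sum because it has no terminal children, so subcase~2b is never executed there. Since $|V(\Gofr)| = 3r+4$, this forces $n_{+}(\Gofr) = (3r+4) - (2r+3) - 0 = r+1$, yielding the inertia $(r+1, 0, 2r+3)$. Finally, Theorem~\ref{thr:multminusone} sums $t_i - 1$ over only the \emph{terminal} children of each $\join$-node, giving
$$
m(-1;\Gofr) \;=\; (r+2-1) + (r+1)(2-1) \;=\; 2(r+1).
$$

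The only place there is anything to be careful about is the cotree bookkeeping: one must place $\Gofr$ into the alternating form correctly (so that the top $\join$ of the $K_{r+2}$ side merges with the outer root), and one must keep track of the fact that Theorem~\ref{thr:inertia-neg} counts \emph{all} children of a $\join$-node while Theorem~\ref{thr:multminusone} counts only the \emph{terminal} ones---this is precisely why the root contributes $r+2$ to $n_{-}$ but only $r+1$ to $m(-1)$. Once those two observations are in hand, the lemma reduces to the short calculation above.
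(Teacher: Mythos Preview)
Your proof is correct and follows precisely the route the paper takes: the paper's own proof is the single sentence ``This follows from Theorem~\ref{thr:inertia-neg}, Theorem~\ref{thr:inertia-zer} and Theorem~\ref{thr:multminusone},'' and you have simply written out the cotree bookkeeping and the arithmetic that this sentence leaves implicit. Your care in distinguishing total children (for $n_{-}$) from terminal children (for $m(-1)$), and in noting that the sole $\union$-node contributes zero to $n_0$ because it has no terminal leaves, is exactly what is needed.
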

\begin{proof}
This follows from Theorem~\ref{thr:inertia-neg}, Theorem~\ref{thr:inertia-zer} and Theorem~\ref{thr:multminusone}.
\end{proof}

\begin{Lem}
\label{lema7}
The eigenvalue $1$ in $\Gofr$ has multiplicity $r$.
\end{Lem}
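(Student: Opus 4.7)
My plan is to apply \verb+Diagonalize+$(T_{G_r}, -1)$ and read off $m(1;G_r)$ as the number of zero entries on the resulting diagonal, using Theorem~\ref{thr:mainA}(iii). I would process the cotree of Figure~\ref{Figu2} bottom-up in three phases. In Phase~I, each of the $r+1$ bottom $\join$-subtrees contains two leaves at $d=-1$; since $\alpha+\beta=-2\neq 2$, subcase~1a fires, yielding a permanent $-4$ and a new value $d_l=(1-1)/(-4)=0$ on the sibling, which Lemma~\ref{lem:cotreeremove} promotes up to the $\union$-node. No zero is recorded. In Phase~II the $\union$-node carries $r+1$ leaves all with $d=0$; each iteration triggers subcase~2b, producing one permanent $0$ and keeping a $0$-valued leaf, so after $r$ iterations the $\union$-node collapses and one $0$-leaf is promoted to the root $\join$. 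Exactly $r$ permanent zeros have now been recorded.

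The main work is Phase~III, which processes the root $\join$ whose current children are $r+2$ leaves with $d=-1$ (from $K_{r+2}$) plus one leaf with $d=0$. I would prove that this phase contributes no additional zero by establishing the invariant that every diagonal value remaining on the tree lies in $[-1,1)$. The invariant holds initially. Inductively, since $\alpha+\beta<2$ and $\beta<1$, subcase~1c and subcase~1b are excluded, so only subcase~1a can fire, assigning a permanent $d_k=\alpha+\beta-2\in[-4,0)$ (strictly negative, hence nonzero) and a new $d_l=(\alpha\beta-1)/(\alpha+\beta-2)$. The numerator is nonpositive and the denominator is negative, so $d_l\geq 0$; and $d_l<1$ reduces to the identity $(\alpha-1)(\beta-1)>0$, which holds because both factors are negative.

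The only remaining way for Phase~III to introduce a zero is if the final leaf retained after the last iteration happens to be zero, which requires $d_l=0$ at that iteration and hence $\alpha\beta=1$; under the invariant this forces $\alpha=\beta=-1$. This is the main obstacle, which I would rule out combinatorially. A leaf can carry the value $-1$ only if it has never been in a sibling pair, while every processed leaf carries a value in $[0,1)$ by the invariant. For two untouched $-1$-leaves to remain at the very end, the $r+1$ prior iterations would have to draw exclusively from the other $r+1$ leaves; but each iteration consumes two leaves and returns only one to the tree, so this pool can shrink to at most one leaf after $r$ iterations, forcing the $(r{+}1)$-st iteration to touch one of the two reserved leaves---a contradiction. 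Hence Phase~III produces no permanent zero, the total zero count is exactly $r$, and $m(1;G_r)=r$.
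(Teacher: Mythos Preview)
Your argument is correct, and Phases~I and~II coincide with the paper's proof. The difference lies in how the upper bound is obtained. The paper stops after Phase~II, notes that $m(1;G_r)\geq r$, and then appeals to Lemma~\ref{lema4} (which gives $n_{+}(G_r)=r+1$) together with the Perron--Frobenius fact that the largest eigenvalue of a connected graph is simple; this immediately rules out $m(1;G_r)=r+1$. You instead push the algorithmic analysis through Phase~III: the invariant $d_l\in[0,1)$ after each subcase~1a, the observation that $d_l=0$ forces $\alpha=\beta=-1$, and the pigeonhole count showing two untouched leaves cannot survive to the last iteration. Your route is longer but entirely self-contained within the diagonalization framework, avoiding both the inertia formula and the external spectral fact; the paper's route is a two-line finish once Lemma~\ref{lema4} is in hand. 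Both are valid, and yours has the minor virtue of being order-independent without invoking Sylvester explicitly.
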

\begin{proof}
By Theorem~\ref{thr:mainA} it suffices  to show that
\verb+Diagonalize+$(T_{\Gofr}, -1)$
creates exactly $r$ zeros.
After initializing vertices with $-1$,
the algorithm performs
{\bf subcase~1a}
on each of the $r+1$ sibling pairs
$\{v_k, v_l\}$ of depth three.
Each operation
\begin{eqnarray*}
d_{k} & \leftarrow & -4 \\
d_{l} &\leftarrow & 0
\end{eqnarray*}
leaves a zero on the tree at depth two.
After all depth three
pairs have been processed, the algorithm
applies {\bf subcase~2b}
to $r$ sibling pairs
$\{v_k, v_l\}$ where $d_k = d_l = 0$.
This creates $r$ permanent zeros,
and so $m(1; \Gofr) \geq r$.
By Lemma~\ref{lema4}, $\Gofr$ has exactly
$r+1$ positive eigenvalues.
However $m(1; \Gofr ) = r+1$
would contradict the well-known fact that
a graph's largest eigenvalue is simple.
So we have $m(1; \Gofr) = r$.
\end{proof}

Since $n = 3r + 4$,
Lemmas \ref{lema4} and \ref{lema7} account for all but two eigenvalues.

\begin{Lem}
\label{lema8}
The largest and smallest eigenvalues of $\Gofr$ are
\begin{eqnarray}
\lambda_1& = & 2r +3     \label{equ9} \\
\lambda_n & =& -(r+1) .   \label{equ10}
\end{eqnarray}
\end{Lem}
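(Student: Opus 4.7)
The plan is to reduce to a small linear/quadratic system by leveraging what the previous lemmas already tell us about the spectrum of $\Gofr$. By Lemma~\ref{lema4}, the inertia of $\Gofr$ is $(r+1,\,0,\,2r+3)$ and $-1$ occurs with multiplicity $2(r+1)=2r+2$; by Lemma~\ref{lema7}, the eigenvalue $+1$ has multiplicity exactly $r$. Together these account for $r+(2r+2)=3r+2$ of the $n=3r+4$ eigenvalues. Since $r$ of the $r+1$ positive eigenvalues equal $1$ and $2r+2$ of the $2r+3$ negative eigenvalues equal $-1$, precisely one unaccounted-for eigenvalue is positive and one is negative; call them $\mu_+$ and $\mu_-$. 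In particular $\mu_+$ must be the largest eigenvalue $\lambda_1$ and $\mu_-$ must be the smallest eigenvalue $\lambda_n$, so it suffices to determine $\mu_+$ and $\mu_-$.

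Next I would apply two trace identities. From $\operatorname{tr}(A)=0$,
$$
\mu_+ + \mu_- + r(1) + (2r+2)(-1) = 0,
$$
hence $\mu_+ + \mu_- = r+2$. A direct edge count in $\Gofr$ (the $\binom{r+2}{2}$ edges inside $K_{r+2}$, the $r+1$ edges inside $\bigsqcup K_2$, and the $2(r+1)(r+2)$ edges from the join) gives $|E(\Gofr)|=(r+1)(5r+12)/2$, so
$$
\operatorname{tr}(A^2) = 2|E(\Gofr)| = (r+1)(5r+12) = 5r^2+17r+12.
$$
Subtracting the contributions of the known $\pm 1$ eigenvalues yields $\mu_+^2+\mu_-^2 = 5r^2+14r+10$.

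From these two relations,
$$
(\mu_+-\mu_-)^2 = 2(\mu_+^2+\mu_-^2) - (\mu_++\mu_-)^2 = 9r^2+24r+16 = (3r+4)^2,
$$
so $\mu_+-\mu_- = 3r+4$. Solving with $\mu_++\mu_- = r+2$ gives $\mu_+ = 2r+3$ and $\mu_- = -(r+1)$, which is (\ref{equ9}) and (\ref{equ10}).

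I do not foresee a genuine obstacle: the earlier lemmas have already isolated the two unknown eigenvalues, and two scalar invariants suffice to pin them down. The only mildly delicate step is the edge count, which is routine. As an independent sanity check, the equitable partition $\{V(K_{r+2}),\,V(\bigsqcup K_2)\}$ has quotient matrix $B=\bigl(\begin{smallmatrix} r+1 & 2(r+1)\\ r+2 & 1\end{smallmatrix}\bigr)$ with characteristic polynomial $\lambda^2-(r+2)\lambda-(r+1)(2r+3)$ and discriminant $(3r+4)^2$, whose roots $2r+3$ and $-(r+1)$ are automatically eigenvalues of $A(\Gofr)$ and agree with the computation above.
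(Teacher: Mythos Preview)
Your proof is correct, and it takes a genuinely different route from the paper's. The paper establishes (\ref{equ10}) by running \verb+Diagonalize+$(T_{\Gofr},\,r+1)$ and tracking the algorithm level by level (via Lemmas~\ref{rem:join} and~\ref{rem:union}) to show that the resulting diagonal is nonnegative with exactly one zero; (\ref{equ9}) then follows from the trace being zero. You instead use the multiplicities already pinned down in Lemmas~\ref{lema4} and~\ref{lema7} to isolate the two unknown eigenvalues, and determine them from the two scalar invariants $\operatorname{tr}(A)=0$ and $\operatorname{tr}(A^2)=2|E(\Gofr)|$; the equitable-partition check is a nice redundant confirmation.

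What each approach buys: yours is shorter, elementary, and entirely independent of the diagonalization machinery---it would work for any graph once all but two eigenvalues are known. The paper's argument is longer but serves as a further demonstration of the \verb+Diagonalize+ algorithm in action, which is the paper's main object of study; it also avoids the edge count and does not appeal to equitable partitions. One minor expository point: your assertion that $\mu_+=\lambda_1$ and $\mu_-=\lambda_n$ is made before the values are computed, when a priori $\mu_+$ might lie in $(0,1)$ or $\mu_-$ in $(-\infty,-1)\cup(-1,0)$ in some unknown order relative to $\pm 1$. This is harmless, since once you obtain $\mu_+=2r+3>1$ and $\mu_-=-(r+1)<-1$ the claim is verified, but it would read more cleanly to defer that sentence until after the computation.
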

\begin{proof}
We will prove
(\ref{equ10}),
and
(\ref{equ9}) will follow since a graph's eigenvalues sum to zero.
To show (\ref{equ10}),
it suffices to prove that
\verb+Diagonalize+$(T_{\Gofr}, r+1)$
creates a non-negative diagonal with a single zero.
Consider the $r+1$ sibling pairs at depth three.
Since $\alpha = \beta = r+1$,
for each pair
{\bf subcase~1a} is executed.
The assignments
\begin{eqnarray*}
d_{k} & \leftarrow & 2r \\
d_{l} &\leftarrow & \frac{r+2}{2}
\end{eqnarray*}
are made, leaving $r+1$ vertices with a permanent positive value of $2r.$

At depth two, there are $r+1$ leaves all with positive value $y = \frac{r+2}{2}$,
and $r$ iterations are performed.
By Lemma~\ref{rem:union},
each iteration generates
the positive permanent diagonal
value of (\ref{eq:remuniona}).
On iteration $r$,
the assignment in (\ref{eq:remunionb})
$$
d_{l} \leftarrow   \frac{y}{j+1} = \frac{r+2}{2(r+1)}
$$
is made to a vertex $u$ which then gets moved under the root.

At depth one, there are $r+3$ leaves:
$r+2$ with diagonal value $r+1$,
and a single leaf $u$ whose diagonal value is $\frac{r+2}{2(r+1)}$.
Assume the algorithm processes the leaves with identical value first.
Letting $y = r+1$ in Lemma~\ref{rem:join},
we see that on each of the $r+1$ iterations
the algorithm generates the permanent positive diagonal value in (\ref{eq:remjoina}).
Letting $j = r+1$ in (\ref{eq:remjoinb})
we see that the last iteration leaves the value
$\frac{2(r+1)}{r+2}$ on the tree.
For the last step of the algorithm, we process the two remaining vertices
whose diagonal values are $\alpha = \frac{2(r+1)}{r+2}$
and $\beta = \frac{r+2}{2(r+1)}$.
Since $\alpha > 2$ and $\beta >0$,
on the last iteration {\bf subcase~1a} assigns
\begin{eqnarray*}
 d_k & \leftarrow &  \alpha + \beta -2  > 0 \\
 d_l  &\leftarrow & \frac{ \alpha \beta - 1}{\alpha + \beta -2} = 0
\end{eqnarray*}
creating a positive value and zero for the last two diagonal entries.
\end{proof}

From Lemma~\ref{lema4}, Lemma \ref{lema7}, and Lemma \ref{lema8} it follows that
$$
E(\Gofr ) = 2(r+1) + r + (2r +3) + (r+1) =  2(3r +4) - 2  = 2n -2
$$
so we have:
\begin{Thr}
\label{}
For each $r \geq 1$, $\Gofr$ and $K_{3r +4}$ are equienergetic.
\end{Thr}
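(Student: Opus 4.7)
The plan is to compute $E(G_r)$ directly by accounting for every eigenvalue of $G_r$ with its multiplicity, using Lemmas \ref{lema4}, \ref{lema7}, and \ref{lema8}. The order $n = 3r+4$, so I need to account for $3r+4$ eigenvalues.

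First I would collect the data. From Lemma \ref{lema4}, $G_r$ has $r+1$ positive eigenvalues, no zero eigenvalue, and $2r+3$ negative eigenvalues, with $-1$ occurring with multiplicity $2(r+1)$. From Lemma \ref{lema7}, the eigenvalue $1$ has multiplicity $r$. From Lemma \ref{lema8}, the largest eigenvalue is $2r+3$ and the smallest is $-(r+1)$. A quick count confirms these multiplicities exhaust the spectrum: on the positive side $1 + r = r+1$ eigenvalues, matching $n_+$; on the negative side $2(r+1) + 1 = 2r+3$ eigenvalues, matching $n_-$.

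Next I would sum the absolute values:
\begin{equation*}
E(G_r) \;=\; (2r+3) \;+\; r\cdot 1 \;+\; 2(r+1)\cdot 1 \;+\; (r+1) \;=\; 6r + 6.
\end{equation*}
Since $E(K_{3r+4}) = 2(3r+4) - 2 = 6r+6$, equality follows. This is precisely the computation already displayed just before the theorem statement, so formally the theorem is an immediate corollary of Lemmas \ref{lema4}, \ref{lema7} and \ref{lema8}.

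There is essentially no obstacle here: all the hard work was done in establishing the three supporting lemmas via the diagonalization algorithm. The only thing to verify carefully is the bookkeeping, namely that the listed eigenvalues $2r+3$, $1$ (with multiplicity $r$), $-1$ (with multiplicity $2(r+1)$), and $-(r+1)$ together account for all $3r+4$ eigenvalues and agree with the inertia in Lemma \ref{lema4}. Once that check is in place, writing out the one-line sum finishes the proof.
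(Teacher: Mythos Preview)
Your proposal is correct and is essentially identical to the paper's own proof: the paper also derives the theorem in one line from Lemmas~\ref{lema4}, \ref{lema7}, and \ref{lema8} by summing $2(r+1)+r+(2r+3)+(r+1)=2(3r+4)-2$. Your additional bookkeeping check that the listed eigenvalues exhaust the spectrum is a nice explicit justification of something the paper leaves implicit.
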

Clearly $\Gofr$ and $K_n$ are noncospectral.
Note $\Gofr$ is not a threshold graph since its cotree is not a caterpillar.
Three other infinite classes of cographs, equienergetic to complete graphs, were discovered.
In all cases, the cotrees had depth three.
Since the cotree structure and proofs are similar to the above example, we omit them.

\bibliographystyle{amsplain}
\bibliography{paper}

\end{document}